\newcommand{\multiline}[1]{%
  \begin{tabularx}{\dimexpr\linewidth-\ALG@thistlm}[t]{@{}X@{}}
    #1
  \end{tabularx}
}
\newcommand{\norm}[1]{\left\|#1\right\|}
\newcommand{\operator}[1]{\mathsf{#1}}
\newcommand{\A}{\operator{A}}
\newcommand{\B}{\operator{B}}
\newcommand{\F}{\operator{F}}   
\renewcommand{\H}{\operator{H}}
\newcommand{\G}{\operator{G}}
\newcommand{\dt}{\,\mathsf{d} t}
\newcommand{\ds}{\,\mathsf{d} s}
\newcommand{\x}{\bm{\mathsf{x}}}
\newcommand{\nv}{\bm{\mathsf{n}}}
\newcommand{\dx}{\,\mathsf{d} \x}
\newcommand{\dprod}[1]{\left<#1\right>}
\newcommand{\muf}[1]{\mu\big(\left|\nabla #1 \right|^2\big)}
\DeclareMathOperator{\Div}{div}
\newcommand{\Ltwo}{\mathrm{L}^2}
\newcommand{\Hone}{\mathrm{H}^1}
\newtheorem{theorem}{Theorem}[section]
\newtheorem{proposition}[theorem]{Proposition} 
\newtheorem{corollary}[theorem]{Corollary} 
\theoremstyle{definition}
\newtheorem{remark}[theorem]{Remark}
\title[Modified Ka\v{c}anov method]{A modified Ka\v{c}anov iteration scheme with application to quasilinear diffusion models}
\author{Pascal Heid}
\email{pascal.heid@maths.ox.ac.uk}
\address{Mathematical Institute, University of Oxford, Oxford, OX2 6GG, UK}
\author{Thomas P.~Wihler}
\email{wihler@math.unibe.ch}
\address{Mathematics Institute, University of Bern, CH-3012 Bern, Switzerland}
\thanks{The authors acknowledge the financial support of the Swiss National Science Foundation (SNF), Grant No. 200021\underline{\space}182524, and Project No.~P2BEP2\underline{\space}191760.}
\keywords{%
Quasilinear elliptic PDE,
strongly monotone problems,
fixed point iterations,
Ka\v{c}anov method,
quasi-Newtonian fluids,
shear-thickening fluids.
}
\subjclass[2010]{35J62, 47J25, 47H05, 47H10, 65J15, 65N12}
\begin{document}

\begin{abstract}
The classical Ka\v{c}anov scheme for the solution of nonlinear variational problems can be interpreted as a fixed point iteration method that updates a given approximation by solving a linear problem in each step. Based on this observation, we introduce a \emph{modified Ka\v{c}anov method}, which allows for (adaptive) damping, and, thereby, to derive a new convergence analysis under more general assumptions and for a wider range of applications. For instance, in the specific context of quasilinear diffusion models, our new approach does no longer require a standard monotonicity condition on the nonlinear diffusion coefficient to hold. Moreover, we propose two different adaptive strategies for the practical selection of the damping parameters involved.
\end{abstract}

\maketitle

\section{Introduction} \label{sec:introduction}

In this article we focus on a novel iterative Ka\v{c}anov type procedure for the solution of quasilinear elliptic partial differential equations (PDE) of the form 
\begin{subequations} \label{eq:scl}
\begin{alignat}{2}
-\Div \big\{\muf{u} \nabla u \big\}&=f  &\qquad&\text{in } \Omega \label{eq:sclmain} \\ 
u&=g && \text{on } \Gamma_1 \label{eq:scldirichlet} \\ 
-\muf{u} \nabla u \cdot \nv &= h &&\text{on } \Gamma_2, \label{eq:sclneumann}
\end{alignat}
\end{subequations}
where $\Omega \subset \mathbb{R}^d$, $d \in \{2,3\}$, is a non-empty, open, and bounded domain with a Lipschitz boundary $\partial \Omega$. We suppose that $\partial\Omega=\overline{\Gamma}_1 \cup \overline{\Gamma}_2$ is composed of a Dirichlet boundary part $\Gamma_1\neq\emptyset$ (of non-zero surface measure) and a Neumann boundary part $\Gamma_2$, and  $\nv$ denotes the unit outward normal vector on $\Gamma_2$. Moreover, $\mu$ is a real-valued diffusion coefficient, and $g$ and $h$ are Dirichlet and Neumann boundary condition functions, respectively. Nonlinear equations of this type are widely used in mathematical models of physical applications including, for instance, hydro- and gas-dynamics, as well as elasticity and plasticity, see, e.g., \cite{HaJeSh:97} and the references therein; we further refer to~\cite[\S69.2--69.3] {Zeidler:88} and~\cite[\S1.1]{Astala:09} for a discussion of the physical meaning.

For a given initial guess $u^0$ with $u^0=g$ on $\Gamma_1$, the traditional Ka\v{c}anov scheme for the solution of \eqref{eq:scl} is given by
\begin{subequations} \label{eq:kacanovs}
\begin{alignat}{2} 
-\Div \big\{\muf{u^n} \nabla u^{n+1} \big\}&=f  &\qquad &\text{in } \Omega \label{eq:kacanovsmain} \\ 
u^{n+1}&=g && \text{on } \Gamma_1 \label{eq:kacanovsdirichlet} \\ 
-\muf{u^n} \nabla u^{n+1} \cdot \nv &= h &&\text{on } \Gamma_2; \label{eq:kacanovsneumann}
\end{alignat}
\end{subequations}
this iteration scheme was originally introduced by Ka\v{c}anov in~\cite{kacanov:1959}, in the context of variational methods for plasticity problems. Observing that the above boundary value problem is a linear PDE for $u^{n+1}$ (given $u^n$), we can see Ka\v{c}anov's scheme as an \emph{iterative linearization method}, cf.~the general abstract iterative linearization methodology in~\cite{HeidWihler:2020a}.
In the literature, standard assumptions on the nonlinearity $\mu$, which guarantee the convergence of~\eqref{eq:kacanovs}, are expressed as follows, see, e.g. \cite{Zeidler:90,HaJeSh:97,GarauMorinZuppa:11,HeidWihler:2020a}:
\begin{enumerate}[($\mu$1)]
\item The diffusion function $\mu:\,[0,\infty) \to [0,\infty)$ is continuously differentiable;
\item The diffusion function $\mu$ is decreasing, i.e., $\mu'(t) \leq 0$ for all $t \geq 0$;
\item There are positive constants $m_\mu$ and $M_\mu$ such that
$m_\mu \leq \mu(t) \leq M_\mu$ for all $t \geq 0$;
\item There exists a positive constant $c_\mu >0$ such that
$2 \mu'(t^2)t^2+\mu(t^2) \geq c_\mu$ for all  $t \geq 0$; if we let 
\begin{align} \label{eq:orlicz}
\phi(t):=\int_0^t\mu(s^2)s \ds, \qquad t \geq 0,
\end{align} 
then we note that this condition means that $\phi$ is strictly convex.

\end{enumerate}
Examples satisfying the assumptions ($\mu$1)--($\mu$4) can be found, e.g., in~\cite{HaJeSh:97} and the references therein. \\

In this work we address the open question of whether the monotonicity assumption ($\mu$2) is necessary or not for the convergence of the Ka\v{c}anov scheme. Indeed, numerical experiments in~\cite{GarauMorinZuppa:11,HeidWihler:2020a} indicate that it may be dropped. Based on this observation, in this work, we will introduce a \emph{modified Ka\v{c}anov iteration method that converges without imposing condition} ($\mu$2), and, thereby, allows for the application to a considerably wider range of physical models. For instance, in the context of quasi-Newtonian fluids, the analysis of the traditional Ka\v{c}anov method is limited to shear-thinning materials corresponding to a decreasing viscosity coefficient, whilst our new scheme can be applied, in addition, to shear-thickening substances, which have an increasing viscosity coefficient $\mu$. We note that the classical Ka\v{c}anov method was already applied to incompressible generalized Newtonian fluid flow problems with a power-law like rheology of possibly shear-thickening fluids in~\cite{Carelli:2010}; in that work, however, very restrictive conditions needed to be imposed in order to show the convergence of the sequence of iterates to a solution of a regularized problem. Moreover, advanced convergence results of the Ka\v{c}anov scheme for a relaxed $p$-Poisson problem, for $1<p \leq 2$, can be found in~\cite{Diening:2020}. Due to the assumption on $p$, we point out that the analysis in that work is restricted to decreasing diffusion coefficients $\mu$, and, moreover, the coefficient is bounded from below and above by the lower and upper cut-off parameters from the truncation considered in the relaxed $p$-Poisson problem. The convergence analysis of the Ka\v{c}anov scheme (but not the overall analysis) in~\cite{Diening:2020} has been developed further and generalized to a broader class of decreasing diffusion coefficients, which correspond to the viscosity function of shear-thinning fluids, in~\cite{HeidSuli:21}. We note that the analysis in~\cite{HeidSuli:21} can indeed also be applied to the generalized Stokes problem, cf.~\cite[\S 4.1]{HeidSuli2:21}, where the convergence is shown for the Bercovier--Engelman regularization of steady Bingham fluids. We emphasize once more that a key prerequisite in the convergence analysis of the Ka\v{c}anov scheme in~\cite{Diening:2020, HeidSuli:21,HeidSuli2:21}, as well as in the classical proof, is the monotonicity of the diffusion coefficient; in fact, in all the convergence proofs of the Ka\v{c}anov scheme we are aware of, except for the one in~\cite{Carelli:2010}, it is shown that an underlying (energy) functional decays along the sequence generated by the Ka\v{c}anov scheme, for which, again, it is standard to assume that the diffusion coefficient is monotonically decreasing. The proof in the present work is also based on the decay of the energy functional, which, however, can be obtained without imposing the assumption $(\mu 2)$ when a damping parameter is introduced. The key idea in devising the modified scheme together with the proof of convergence is based on the fact that~\eqref{eq:kacanovs} can be cast into the unified iteration scheme introduced in~\cite{HeidWihler:2020a}, see also \cite{HeidWihler:2020b}. To sketch the idea, for $\Gamma_2=\emptyset$, upon defining the PDE residual
\[
\F:=-\Div \big\{\muf{(\cdot)} \nabla (\cdot) \big\}-f, 
\] 
and, for given $u$, the \emph{linear preconditioning operator}
\[
\A(u)(\cdot):=-\Div \big\{\muf{u} \nabla (\cdot) \big\},
\]
the iterative procedure~\eqref{eq:kacanovs} can be written (formally) in terms of the \emph{fixed point iteration}
\[
u^{n+1}=u^n-\A(u^n)^{-1}\F(u^n),\qquad n\ge 0.
\]
With the aim of obtaining an improved control of the updates in each step, we introduce a \emph{step size parameter} $\delta(u^n)>0$ in the iteration, viz.
\[
u^{n+1}=u^n-\delta(u^n)\A(u^n)^{-1}\F(u^n),\qquad n\ge 0.
\]
This yields the \emph{modified Ka\v{c}anov method} proposed and analyzed in this work.

\subsubsection*{Outline} 

We begin by deriving an appropriate framework for abstract nonlinear variational problems in \S\ref{sec:abstract}. In particular, we introduce a modified version of the classical Ka\v{c}anov iteration scheme, and prove a new convergence result under assumptions that are milder than in the classical setting. The purpose of \S\ref{sec:stepsize} is to devise two different adaptive strategies for the selection of the damping parameters in the modified method. Subsequently, our general theory is applied to quasilinear diffusion models in \S\ref{sec:dampedkacanov}, which also contains a numerical study within the framework of finite element discretizations. Finally, we add some concluding remarks in~\S\ref{sec:conclusion}.

\section{Abstract analysis}\label{sec:abstract}

Throughout, $Y$ is a reflexive real Banach space, equipped with a norm denoted by $\|\cdot\|_Y$, and $K\subset Y$ is a closed, convex subset. 

\subsection{Nonlinear variational problems}

Consider a (nonlinear) G\^ateaux continuously differentiable functional $\H:\,K\to\mathbb{R}$ that has a strongly monotone G\^ateaux derivative, i.e., there exists a constant $\nu >0$ such that 
\begin{align} \label{eq:strongmonotonicity}
\dprod{\H'(u)-\H'(v),u-v} \geq \nu \norm{u-v}_Y^2 \qquad \forall u,v \in K,
\end{align}
where $\dprod{\cdot,\cdot}$ is the duality product on $Y^\star\times Y$, with $Y^\star$ signifying the dual space of~$Y$.

\begin{proposition}\label{pr:varabstract}
Suppose that $\H:\,K\to\mathbb{R}$ is a (G\^ateaux) continuously differentiable functional that satisfies the strong monotonicity condition~\eqref{eq:strongmonotonicity}. Then, there exists a unique minimizer $u^\star\in K$ of $\H$ in $K$, i.e. $\H(u^\star)\le \H(v)$ for all $v\in K$. Furthermore, $u^\star\in K$ is the unique solution of the weak inequality  
\begin{align} \label{eq:variationalinequality}
\dprod{\H'(u^\star),v-u^\star} \geq 0 \qquad \forall v \in K.
\end{align}
\end{proposition}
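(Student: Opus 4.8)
The plan is to prove Proposition~\ref{pr:varabstract} by the direct method of the calculus of variations, after first upgrading the strong monotonicity of $\H'$ to a strong convexity estimate for $\H$ itself.

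\emph{Step 1 (strong convexity).} For fixed $u,v\in K$ I would consider the scalar function $g:[0,1]\to\mathbb{R}$, $g(t):=\H(u+t(v-u))$, which is well defined by convexity of $K$ and, since $\H$ is G\^ateaux differentiable with continuous derivative, continuously differentiable with $g'(t)=\dprod{\H'(u+t(v-u)),v-u}$. Applying \eqref{eq:strongmonotonicity} to the pair $u+t(v-u)$, $u+s(v-u)$ gives $(t-s)\big(g'(t)-g'(s)\big)\ge\nu(t-s)^2\norm{u-v}_Y^2$, hence $g'(t)-g'(s)\ge\nu(t-s)\norm{u-v}_Y^2$ for $0\le s\le t\le1$. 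Integrating this over $t\in[0,1]$ against the value at $s=0$ yields the strong convexity inequality
\[
\H(v)\ge\H(u)+\dprod{\H'(u),v-u}+\tfrac{\nu}{2}\norm{u-v}_Y^2\qquad\forall\,u,v\in K,
\]
and, in particular (setting $\nu=0$ on the right), the tangent inequality $\H(v)\ge\H(u)+\dprod{\H'(u),v-u}$.

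\emph{Step 2 (coercivity and weak lower semicontinuity).} Fixing any $u_0\in K$ and using $|\dprod{\H'(u_0),v-u_0}|\le\norm{\H'(u_0)}_{Y^\star}\norm{v-u_0}_Y$ in the strong convexity estimate shows $\H(v)\ge\H(u_0)-\norm{\H'(u_0)}_{Y^\star}\norm{v-u_0}_Y+\tfrac{\nu}{2}\norm{v-u_0}_Y^2\to\infty$ as $\norm{v}_Y\to\infty$, i.e. $\H$ is coercive on $K$. The tangent inequality exhibits $\H$ as a pointwise supremum of weakly continuous affine functionals, so if $v_k\rightharpoonup v$ in $Y$ then $\liminf_k\H(v_k)\ge\H(u)+\dprod{\H'(u),v-u}$ for every $u\in K$, and choosing $u=v$ gives $\liminf_k\H(v_k)\ge\H(v)$; thus $\H$ is weakly sequentially lower semicontinuous.

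\emph{Step 3 (existence, VI characterization, uniqueness).} A minimizing sequence $(v_k)\subset K$ is bounded by coercivity; since $Y$ is reflexive, a subsequence converges weakly to some $u^\star$, which lies in $K$ because a closed convex set is weakly closed, and weak lower semicontinuity gives $\H(u^\star)\le\inf_K\H$, so $u^\star$ is a minimizer. For the variational inequality: if $u^\star$ minimizes, the difference quotient $t^{-1}\big(\H(u^\star+t(v-u^\star))-\H(u^\star)\big)$ is nonnegative for $t\in(0,1]$ (as $u^\star+t(v-u^\star)\in K$), and letting $t\downarrow0$ gives \eqref{eq:variationalinequality}; conversely \eqref{eq:variationalinequality} together with the tangent inequality gives $\H(v)\ge\H(u^\star)$ for all $v\in K$. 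Uniqueness follows by testing \eqref{eq:variationalinequality} for $u_1$ with $v=u_2$ and for $u_2$ with $v=u_1$ and adding, which yields $\dprod{\H'(u_1)-\H'(u_2),u_1-u_2}\le0$; combined with \eqref{eq:strongmonotonicity} this forces $\norm{u_1-u_2}_Y=0$.

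The scalar calculus in Step~1 and the compactness argument in Step~3 are routine; the one place requiring care is the passage to the weak limit, namely deducing weak lower semicontinuity of $\H$ from convexity via the tangent inequality and checking that the weak limit of the minimizing sequence remains in $K$ — this is precisely where reflexivity of $Y$ and the closed convexity of $K$ are used, and I expect it to be the main (though standard) obstacle.
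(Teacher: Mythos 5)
Your proof is correct, and its overall skeleton (coercivity from strong monotonicity, weak sequential lower semicontinuity, the direct method in a reflexive space, and the first-order condition via the difference quotient) coincides with the paper's. The differences are in what you make explicit versus what the paper delegates to the literature: you derive the quantitative strong convexity inequality $\H(v)\ge\H(u)+\dprod{\H'(u),v-u}+\tfrac{\nu}{2}\norm{u-v}_Y^2$ by integrating the scalar derivative, and from its $\nu=0$ consequence you prove weak sequential lower semicontinuity directly (affine minorants are weakly continuous), whereas the paper invokes \cite[Props.~25.10, 25.20, Thms.~25.C, 25.D]{Zeidler:90} for strict convexity, lower semicontinuity, and existence. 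Your converse direction of the variational-inequality characterization also uses the tangent inequality rather than the paper's argument that $\varphi(t)=\H(u^\star+t(v-u^\star))$ is increasing, and your uniqueness proof (testing the variational inequality at two solutions and adding, then using strong monotonicity) is stated for arbitrary solutions of \eqref{eq:variationalinequality} and hence directly yields uniqueness of the solution of the weak inequality, not only of the minimizer — a point the paper settles via strict convexity. Both routes are sound; yours buys self-containedness and an explicit strong-convexity constant, the paper's is shorter by citation. The one place where you should be slightly careful is the application of the fundamental theorem of calculus to $g$, which requires $t\mapsto g'(t)$ to be continuous; this is exactly what the standing assumption of G\^ateaux \emph{continuous} differentiability provides, so no gap arises.
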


\begin{proof}
We follow along the lines of the proof of~\cite[Thm.~25.L]{Zeidler:90}. 
\begin{enumerate}[1.]
\item Since $\H$ is a G\^ateaux continuously differentiable functional on $K$, it is, in particular, continuous on $K$. Moreover, from~\eqref{eq:strongmonotonicity} we infer that $\H$ is strictly convex, see, e.g., \cite[Prop.~25.10]{Zeidler:90}. These two properties, in turn, imply that $\H$ is weakly sequentially lower semicontinuous, see~\cite[Prop.~25.20]{Zeidler:90}. 
\item If the set $K$ is bounded, then the functional $\H$, being weakly sequentially lower semicontinuous, has a minimum on $K$, see~\cite[Thm.~25.C]{Zeidler:90}. Otherwise, if $K$ is unbounded, then we show that $\H$ is weakly coercive. To this end, take any $u,v \in K$, and define the scalar function 
\begin{equation}\label{eq:phi}
\varphi(t):=\H(u+t(v-u)),\qquad t \in [0,1];
\end{equation} 
since $K$ is convex, note that $u+t(v-u) \in K$ for all $t \in [0,1]$.  Applying the chain rule reveals that
\begin{align}\label{eq:7}
\varphi'(t)=\dprod{\H'(u+t(v-u)),v-u}.
\end{align}
Thus, by virtue of the fundamental theorem of calculus, we deduce that
\begin{align*}
\H(v)-\H(u)&=\int_0^1 \dprod{\H'(u+t(v-u)),v-u} \dt \\
&=\int_0^1 \dprod{\H'(u+t(v-u))-\H'(u),v-u} \dt + \dprod{\H'(u),v-u}.
\end{align*}
Therefore, exploiting \eqref{eq:strongmonotonicity}, it follows that
\begin{align*}
\H(v)-\H(u) \geq \frac{\nu}{2} \norm{v-u}_Y^2-\norm{\H'(u)}_{Y^\star}\norm{v-u}_Y.
\end{align*} 
Hence, we see that $\H(v) \to \infty$ for $\norm{v}_Y \to \infty$, i.e., $\H$ is weakly coercive on $K$. Then, owing to~\cite[Thm.~25.D]{Zeidler:90}, we conclude that $\H$ has a minimum $u^\star$ in $K$. We note that this minimum is unique since $\H$ is strictly convex. 
\item Finally, if $u^\star$ is the minimum of $\H$ in $K$, then the function $\varphi(t)$ from~\eqref{eq:phi}, with $u=u^\star$, has a minimum at $t=0$. This implies that $\varphi'(0) \geq 0$. In turn, exploiting~\eqref{eq:7}, this holds true if and only if $\dprod{\H'(u^\star),v-u^\star} \geq 0$, which yields~\eqref{eq:variationalinequality}. 
Conversely, since $\H'$ is strongly monotone, cf.~\eqref{eq:strongmonotonicity}, and $u^\star$ satisfies the weak inequality~\eqref{eq:variationalinequality}, the function $\varphi(t)$ is increasing on $[0,1]$. In fact, for $t\in(0,1]$, we have
\begin{align*}
\varphi'(t)
&=\frac{1}{t}\dprod{\H'(u^\star+t(v-u^\star))-\H'(u^\star),t(v-u^\star)}
+\dprod{\H'(u^\star),v-u^\star}\\
&\ge\frac{1}{t}\dprod{\H'(u^\star+t(v-u^\star))-\H'(u^\star),t(v-u^\star)}
\ge\nu t\|v-u^\star\|^2_Y\ge0.
\end{align*}
Therefore, we obtain
$
\H(v)=\varphi(1)\ge\varphi(0)
=\H(u^\star),
$
i.e., $u^\star$ is the minimum of $\H$ in $K$ since $v$ was arbitrary. 
\end{enumerate}
This completes the proof.
\end{proof}

\begin{remark}
We emphasize that the application of~\cite[Thms.~25.C \& 25.D]{Zeidler:90} in the proof of Proposition~\ref{pr:varabstract} require the space $Y$ to be reflexive. More precisely, these results make use of the fact that every bounded sequence in a reflexive Banach space has a weakly convergent subsequence. 
\end{remark}

Consider the following assumption on the closed and convex subset $K$: 
\begin{enumerate}[(K)]
\item The set $X:=\{u-v:\, u,v\in K\}$ is a linear closed subspace of $Y$, and $x+y \in K$ for all $x\in X$ and $y\in K$.
\end{enumerate}

\begin{corollary}\label{cor:1}
Suppose that the subset $K$ has property~{\rm(K)}, and let the assumptions of Proposition~\ref{pr:varabstract} hold true. Then, the unique minimizer $u^\star\in K$ of $\H$ satisfies 
\begin{equation}\label{eq:variationalproblem}
\dprod{\H'(u^\star),v}=0\qquad\forall v\in X.
\end{equation}
\end{corollary}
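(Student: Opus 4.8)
The plan is to deduce \eqref{eq:variationalproblem} directly from the variational inequality~\eqref{eq:variationalinequality} established in Proposition~\ref{pr:varabstract}, using the linear structure of $X$ furnished by assumption~(K). Since the minimizer $u^\star\in K$ exists and is unique by Proposition~\ref{pr:varabstract}, and satisfies $\dprod{\H'(u^\star),v-u^\star}\ge0$ for every $v\in K$, the only work left is to turn this one-sided inequality, tested against elements of the form $v-u^\star$, into a two-sided equality tested against arbitrary elements of $X$.

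First I would fix an arbitrary $x\in X$. By property~(K), with $y=u^\star\in K$, we have $u^\star+x\in K$, so we may insert $v:=u^\star+x$ into~\eqref{eq:variationalinequality}; this yields $\dprod{\H'(u^\star),x}=\dprod{\H'(u^\star),(u^\star+x)-u^\star}\ge0$. Next, since $X$ is a linear subspace of $Y$ by~(K), we also have $-x\in X$, and repeating the previous step with $-x$ in place of $x$ gives $\dprod{\H'(u^\star),-x}\ge0$, i.e.\ $\dprod{\H'(u^\star),x}\le0$. Combining the two inequalities forces $\dprod{\H'(u^\star),x}=0$, and since $x\in X$ was arbitrary this is exactly~\eqref{eq:variationalproblem}.

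There is essentially no serious obstacle here: the argument is a routine "cone becomes subspace" observation, and the two substitutions $v=u^\star\pm x$ are legitimate precisely because~(K) guarantees both $u^\star+X\subset K$ and the closure of $X$ under negation. The only point that deserves a word of care is checking that the elements tested are admissible, i.e.\ that $u^\star+x$ genuinely lies in $K$ so that~\eqref{eq:variationalinequality} applies to it; this is immediate from the second clause of~(K). (One could alternatively phrase the proof via the scalar function $\varphi(t)=\H(u^\star+tx)$ on a symmetric interval around $0$ and note $\varphi'(0)=0$ at the interior minimum, but the direct substitution into the variational inequality is shorter and uses only what Proposition~\ref{pr:varabstract} already provides.)
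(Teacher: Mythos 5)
Your argument is correct and coincides with the paper's own proof: both substitute $v=u^\star+x$ and $v=u^\star-x$ into the variational inequality~\eqref{eq:variationalinequality}, using property~(K) to justify admissibility, and conclude $\dprod{\H'(u^\star),x}=0$ from the two one-sided inequalities. Nothing is missing.
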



\begin{proof}
Let $u^\star\in K$ be the unique minimizer of $\H$ on $K$. Then, for
any $v\in X$, owing to property (K), it holds that $v+u^\star \in K$. Thus, using ~\eqref{eq:variationalinequality}, we have
$0\le\dprod{\H'(u^\star),(v+u^\star)-u^\star}=\dprod{\H'(u^\star),v}$.
Similarly, upon replacing $v$ by $-v$, we infer that 
$0\le-\dprod{\H'(u^\star),v}$,
which concludes the argument.
\end{proof}

\subsection{Modified Ka\v{c}anov method}
 
We consider mappings $a:\,K\times Y\times X\to\mathbb{R}$ and $b:\,K\times X\to\mathbb{R}$, with $X$ being the linear subspace from property (K) above, which satisfy the following properties:
\begin{enumerate}[({A}1)]
\item For any given $u\in K$, we suppose that $a(u;\cdot,\cdot)$ is a bilinear form on $Y \times X$, and $b(u,\cdot)\in X^\star$; in the sequel, we use the notation $b(u,\cdot)=\dprod{b(u),\cdot}$, where the dual product is evaluated on the space $X^\star\times X$. 
\item There exist positive constants $\alpha,\beta>0$ such that, for any $u\in K$, the form $a(u;\cdot,\cdot)$ is uniformly bounded on $Y\times X$ and coercive on $X\times X$ in the sense that 
\begin{align} \label{eq:abounded}
a(u;v,w) \leq \beta \norm{v}_Y \norm{w}_Y \qquad \forall v\in Y,~\forall w\in X, 
\end{align}
and 
\begin{align} \label{eq:acoercive}
a(u;v,v)\ge\alpha\norm{v}^2_Y\qquad\forall v\in X,
\end{align}
respectively; in particular, if the set $K$ satisfies property (K), then it follows that
\begin{align} \label{eq:coerciveC}
a(u;v-w,v-w) \geq \alpha \norm{v-w}_Y^2\qquad\forall v,w \in K.
\end{align}
\item There are G\^ateaux continuously differentiable functionals $\G:\,K \to \mathbb{R}$ and $\B:\,K \to \mathbb{R}$ such that, for all $u\in K$, it holds $\G'(u)|_X=a(u;u,\cdot)$ and $\B'(u)|_X=b(u)$ in $X^\star$.
\item The (continuously differentiable) functional $\H:\,K\to\mathbb{R}$ given by $\H(u):=\G(u)-\B(u)$, $u \in K$, with $\G$ and $\B$ from (A3), satisfies the strong monotonicity condition~\eqref{eq:strongmonotonicity}.
\end{enumerate}

If the closed and convex subset $K\subset Y$ fulfils property (K), then the unique minimizer $u^\star\in K$ of the functional $\H$ from (A4) solves the weak formulation
\begin{equation}\label{eq:wf}
0=\dprod{\H'(u^\star),v}
=\dprod{\G'(u^\star)-\B'(u^\star),v}
=a(u^\star;u^\star,v)-\dprod{b(u^\star),v} \qquad \forall v \in X,
\end{equation}
cf. Corollary~\ref{cor:1}. Now, for given $u\in K$, define the \emph{linear} operator $\A(u):\,Y\to X^\star$, $v\mapsto \A(u)v$, by
\[
\dprod{\A(u)v,w}=a(u;v,w) \qquad \forall w \in X.
\]
Then, the weak formulation~\eqref{eq:wf} can be expressed by
\[
\A(u^\star)u^\star=b(u^\star)\qquad\text{in }X^\star.
\]
In light of (A2), for any $u\in K$, we notice that $a(u;\cdot,\cdot)$ is a bounded and coercive bilinear form on the closed subspace $X \times X$. In particular, thanks to the Lax-Milgram theorem, for any $u\in K$ and $\ell\in X^\star$, we conclude that there exists a unique $w_{u,\ell} \in X$ such that $\A(u)w_{u,\ell} =\ell$ in $X^\star$, i.e., $\A(u)|_X:X \to X^\star$ is invertible for any $u \in K$. Hence, noticing that 
\begin{equation}\label{eq:FH'}
\F(u):=\H'(u)=\A(u)u-b(u)\in X^\star,
\end{equation} 
the classical Ka\v{c}anov method in abstract form, for given $u^n\in K$, reads as
\begin{subequations}\label{eq:Ktrad}
\begin{align}
u^{n+1}&=u^n-\rho^n,\qquad n\ge 0,\\
\intertext{where $\rho^n\in X$ is uniquely defined through}
\A(u^n)\rho^{n}&=\F(u^n)\quad\text{in }X^\star.\label{eq:Ktrad2}
\end{align}
\end{subequations}
A modification of this procedure is obtained by invoking a parameter $\delta:\, K \to (0,\infty)$, thereby yielding the new scheme
\begin{align}\label{eq:fixedpointkacanovdamped}
u^{n+1}&=u^{n}-\delta(u^n)\rho^n,\qquad n\ge 0,
\end{align}
with $\rho^n$ as in~\eqref{eq:Ktrad2}.
Equivalently, upon introducing the forms
\begin{equation}\label{eq:forms}
a_{\delta(u)}(u;v,w):=\frac{1}{\delta(u)}a(u;v,w),\qquad
b_{\delta(u)}(u):=\frac{1}{\delta(u)}a(u;u,\cdot)-\F(u),
\end{equation}
for $u\in K, v\in Y,$ and $w\in X$, we derive the \emph{modified Ka\v{c}anov iteration} in weak form:
\begin{equation}\label{eq:Knew}
u^{n+1} \in K: \qquad a_{\delta^n}(u^n;u^{n+1},v)=\dprod{b_{\delta^n}(u^n),v}\qquad\forall v\in X,\qquad n\ge 0,
\end{equation}
where we use the notation $\delta^n:=\delta(u^n)$. Clearly, for $\delta \equiv 1$, the traditional Ka\v{c}anov scheme~\eqref{eq:Ktrad} is recovered.

\begin{proposition} \label{prop:welldefinedkacanov}
Suppose {\rm(A1)--(A4)}, and that $K$ satisfies property {\rm(K)}. Then, for any initial guess $u^0 \in K$, the modified Ka\v{c}anov iteration~\eqref{eq:Knew} remains well-defined for each $n\ge 0$, i.e., for given $u^n\in K$, the solution $u^{n+1}\in K$ of the weak formulation~\eqref{eq:Knew} exists and is unique.
\end{proposition}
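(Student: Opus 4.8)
The plan is to remove the affine constraint $u^{n+1}\in K$ by means of property (K), thereby reducing the weak formulation~\eqref{eq:Knew} to a \emph{linear} problem posed on the closed subspace $X$, and then to invoke the Lax--Milgram theorem exactly as in the discussion preceding the statement.

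Fix $u^n\in K$ and write $\delta^n=\delta(u^n)>0$. Since $X=\{u-v:\,u,v\in K\}$ and, by property~(K), $x+y\in K$ for every $x\in X$ and $y\in K$, any candidate $u^{n+1}\in K$ solving~\eqref{eq:Knew} necessarily satisfies $w:=u^{n+1}-u^n\in X$; conversely, for every $w\in X$ the element $u^n+w$ lies in $K$. Hence it is equivalent to determine $w\in X$ with $u^{n+1}=u^n+w$. Inserting the definitions~\eqref{eq:forms} into~\eqref{eq:Knew}, multiplying through by the positive scalar $\delta^n$, and using the bilinearity of $a(u^n;\cdot,\cdot)$ from~(A1) to cancel the terms $a(u^n;u^n,v)$ on both sides, one sees that~\eqref{eq:Knew} is equivalent to
\begin{equation*}
w\in X:\qquad a(u^n;w,v)=-\delta^n\dprod{\F(u^n),v}\qquad\forall v\in X,
\end{equation*}
that is, $\A(u^n)|_X\,w=-\delta^n\F(u^n)$ in $X^\star$.

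It remains to solve this linear problem on $X$. By~(A2), the bilinear form $a(u^n;\cdot,\cdot)$ is bounded on $Y\times X$, in particular on $X\times X$, and coercive on $X\times X$; note that the rescaling $a_{\delta^n}=\delta^{-n}a$ merely multiplies these constants by the positive factor $\delta^n$, so boundedness and coercivity are preserved — this is precisely where the requirement $\delta:\,K\to(0,\infty)$ is used. Moreover, by~\eqref{eq:FH'} we have $\F(u^n)=\H'(u^n)\in X^\star$, so that $v\mapsto-\delta^n\dprod{\F(u^n),v}$ is a bounded linear functional on $X$. The Lax--Milgram theorem applied on the closed subspace $X$ then yields a unique $w\in X$, and we set $u^{n+1}:=u^n+w\in K$. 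Uniqueness within $K$ follows since the difference of two solutions lies in $X$, is annihilated by $a(u^n;\cdot,\cdot)$, and hence vanishes by~\eqref{eq:acoercive}.

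I do not expect a genuine obstacle: the argument is a routine application of Lax--Milgram once the affine constraint has been eliminated via~(K). The only points that require a little care are the reduction to the subspace $X$ (so that coercivity on $X$, rather than on all of $Y$, is what is invoked) and the observation that the scaling by $1/\delta^n$ destroys neither boundedness nor coercivity, which is exactly guaranteed by $\delta^n>0$.
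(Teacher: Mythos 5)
Your argument is correct and follows essentially the same route as the paper: reduce~\eqref{eq:Knew} to the linear problem $\A(u^n)|_X\,w=-\delta^n\F(u^n)$ on $X$, solve it via Lax--Milgram (i.e.\ the invertibility of $\A(u^n)|_X$ already established in the text), and use property~(K) to conclude $u^{n+1}=u^n+w\in K$. The paper's proof is merely terser, taking the equivalence between~\eqref{eq:Knew} and the update form~\eqref{eq:fixedpointkacanovdamped} as already established in the surrounding discussion, whereas you verify it explicitly.
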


\begin{proof}
For fixed $u^n\in K$, the solution $\rho^n\in X$ of~\eqref{eq:Ktrad2} exists and is unique since $\A(u^n)|_X:X \to X^\star$ is invertible. Moreover, owing to property (K), we infer that $u^{n+1}\in K$ in~\eqref{eq:fixedpointkacanovdamped}.
\end{proof}

\subsection{Convergence analysis}\label{sec:convergence}

We are now in the position to state and prove the main result of our work.

\begin{theorem} \label{thm:convergenceNew}
Given {\rm (A1)--(A4)} and {\rm (K)}. We further assume the following conditions:
\begin{enumerate}[\rm(a)]
\item $\H'$ is continuous with respect to the weak topology on $X^\star$ in the sense that, for any sequence $\{z^n\}_{n\ge 0}\subset K$ with a limit $z^\star\in K$, it holds that 
\begin{equation}\label{eq:Hweak}
\lim_{n\to\infty}\dprod{\H'(z^n),w}=\dprod{\H'(z^\star),w}\qquad \forall w\in X;
\end{equation}
\item there exists a damping strategy such that $\delta(u^n)\geq \delta_{\min}>0$ and
\begin{align} \label{eq:keyinequalityeps}
\H(u^{n})-\H(u^{n+1}) \geq  \gamma \norm{u^{n+1}-u^n}_Y^2 \qquad \forall n \geq 0,
\end{align}
for some constants $\delta_{\min},\gamma>0$ independent of $n$.
\end{enumerate}
Then, the damped Ka\v{c}anov iteration~\eqref{eq:fixedpointkacanovdamped} converges to the unique solution $u^\star \in K$ of~\eqref{eq:variationalproblem} for any initial guess $u^0 \in K$.
\end{theorem}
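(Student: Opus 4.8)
The plan is to establish the \emph{a priori} estimate
\[
\norm{u^{n}-u^\star}_Y\;\le\;\frac{\beta}{\nu\,\delta_{\min}}\,\norm{u^{n+1}-u^{n}}_Y,\qquad n\ge 0,
\]
and then to read off convergence of the \emph{whole} sequence from the fact that the right-hand side tends to zero, which is a consequence of the energy decay~\eqref{eq:keyinequalityeps}. As a preliminary remark, assumptions (A1)--(A4) and (K) guarantee, via Proposition~\ref{pr:varabstract} and Corollary~\ref{cor:1}, that the unique solution $u^\star\in K$ of~\eqref{eq:variationalproblem} exists and coincides with the unique minimiser of $\H$; in particular $\H(u^{n})\ge\H(u^\star)$ for all $n$, since $u^{n}\in K$ by Proposition~\ref{prop:welldefinedkacanov}.

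The argument then rests on two ingredients. The first is routine: by~\eqref{eq:keyinequalityeps} the values $\H(u^{n})$ are non-increasing and bounded below by $\H(u^\star)$, hence convergent, and telescoping~\eqref{eq:keyinequalityeps} over $n=0,\dots,N$ and letting $N\to\infty$ yields $\sum_{n\ge0}\norm{u^{n+1}-u^{n}}_Y^{2}<\infty$, so that $\norm{u^{n+1}-u^{n}}_Y\to0$. The second ingredient — the a priori bound — is the heart of the matter. From~\eqref{eq:fixedpointkacanovdamped} and~\eqref{eq:Ktrad2} the increment $\rho^{n}=\delta(u^{n})^{-1}(u^{n}-u^{n+1})\in X$ satisfies $a(u^{n};\rho^{n},v)=\dprod{\F(u^{n}),v}$ for every $v\in X$, and, recalling that $\F=\H'$ in $X^\star$ by~\eqref{eq:FH'}, this reads
\[
\frac{1}{\delta(u^{n})}\,a(u^{n};u^{n}-u^{n+1},v)=\dprod{\H'(u^{n}),v}\qquad\forall\,v\in X.
\]
I would now test with the direction $v=u^{n}-u^\star$, which lies in $X$ by property~(K) since $u^{n},u^\star\in K$. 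On the right-hand side, the strong monotonicity~\eqref{eq:strongmonotonicity} together with $\dprod{\H'(u^\star),u^{n}-u^\star}=0$ (Corollary~\ref{cor:1}, as $u^{n}-u^\star\in X$) gives the lower bound $\dprod{\H'(u^{n}),u^{n}-u^\star}\ge\nu\norm{u^{n}-u^\star}_Y^{2}$; on the left-hand side, the uniform boundedness~\eqref{eq:abounded} and $\delta(u^{n})\ge\delta_{\min}$ give the upper bound $\tfrac{\beta}{\delta_{\min}}\norm{u^{n}-u^{n+1}}_Y\norm{u^{n}-u^\star}_Y$. Dividing by $\norm{u^{n}-u^\star}_Y$ (the case $u^{n}=u^\star$ being trivial) yields the displayed estimate, whence $u^{n}\to u^\star$ in $Y$.

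I do not anticipate a genuine analytic obstacle; the only delicate points are bookkeeping ones. One must consistently regard $\H'(u^{n})$ and $\H'(u^\star)$ as elements of $X^\star$ (through~\eqref{eq:FH'} and Corollary~\ref{cor:1}), make sure that every test direction belongs to the closed subspace $X$ — this is precisely where property~(K) is indispensable, both for $u^{n}-u^\star\in X$ and for $u^{n+1}\in K$ — and keep the constants $\alpha,\beta$ uniform in $u\in K$, as provided by~(A2). It is worth noting that this direct route uses (A1)--(A4), (K) and~(b) only, and not the weak-continuity hypothesis~(a); the latter would instead enter a more traditional compactness-based argument, in which one extracts a weakly convergent subsequence from the bounded sequence $\{u^{n}\}$ and identifies its weak limit as a solution of~\eqref{eq:variationalproblem} using $\norm{\H'(u^{n})}_{X^\star}\to0$.
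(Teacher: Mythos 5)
Your argument is correct, and it takes a genuinely different---and in one respect sharper---route than the paper. The paper's proof has three stages: the same energy-decay step showing $\norm{u^{n+1}-u^n}_Y\to0$; a Cauchy-sequence estimate obtained by applying the strong monotonicity~\eqref{eq:strongmonotonicity} to two arbitrary iterates $u^m,u^n$ and rewriting $\H'(u^m)-\H'(u^n)$ through the scheme~\eqref{eq:Knew}; and finally an identification of the limit as the solution of~\eqref{eq:variationalproblem}, which is precisely where the weak-continuity hypothesis (a) enters. You instead compare $u^n$ directly with $u^\star$: combining strong monotonicity with the optimality condition $\dprod{\H'(u^\star),v}=0$ for $v\in X$ (Corollary~\ref{cor:1}) and the residual identity $\delta(u^n)^{-1}a(u^n;u^n-u^{n+1},v)=\dprod{\H'(u^n),v}$, tested with $v=u^n-u^\star\in X$, yields
\[
\nu\norm{u^n-u^\star}_Y^2\le\dprod{\H'(u^n),u^n-u^\star}\le\frac{\beta}{\delta_{\min}}\norm{u^{n+1}-u^n}_Y\norm{u^n-u^\star}_Y,
\]
hence $\norm{u^n-u^\star}_Y\le\frac{\beta}{\nu\delta_{\min}}\norm{u^{n+1}-u^n}_Y\to0$. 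All ingredients check out: $u^\star$ exists by Proposition~\ref{pr:varabstract} and Corollary~\ref{cor:1} under (A1)--(A4) and (K); $u^n-u^\star\in X$ and $u^{n+1}\in K$ by (K); and the pairing of $\H'$ with elements of $X$ is consistent with~\eqref{eq:FH'}, exactly as the paper itself uses in~\eqref{eq:uK}. What your route buys is twofold: a quantitative a~posteriori-type error bound in terms of the computable increment, and the observation that hypothesis (a) is not actually needed for the conclusion---the paper's Cauchy step is essentially your computation with $u^\star$ replaced by $u^m$, which is what forces the additional limit-identification step and hence the weak-continuity assumption. The paper's version has the mild advantage of not invoking the variational characterization of $u^\star$ until the very end, but since Proposition~\ref{pr:varabstract} is needed anyway to bound $\H$ from below, this is no real saving.
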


\begin{proof}
We will proceed in three steps: First, we show that the difference of two consecutive iterates, i.e. $\norm{u^{n+1}-u^n}_Y$, tends to zero as $n \to \infty$. Subsequently, we will verify the convergence of $\{u^n\}_{n \geq 0}$, and finally that the limit equals to $u^\star$. For this purpose, we will essentially follow along the lines of the proof of~\cite[Prop.~2.1]{HeidWihler:2020a}; see also the closely related argument in~\cite[Thm.~25.L]{Zeidler:90}.
\begin{enumerate}[1.]
\item In light of Proposition~\ref{pr:varabstract} we recall that $\H$ is bounded from below by $\H(u^\star)$. Moreover, $\{\H(u^n)\}_{n \geq 0}$ is decreasing thanks to the assumption~\eqref{eq:keyinequalityeps}. Hence, this sequence converges, and we conclude that
\begin{align*}
0 \leq \gamma \norm{u^{n+1}-u^n}_Y^2 \leq \H(u^{n+1})-\H(u^{n}) \to 0 \qquad \text{as} \ n \to \infty;
\end{align*}
i.e., $\lim_{n \to \infty} \norm{u^{n+1}-u^n}_Y=0$.
\item Next, we shall verify the existence of the limit of the sequence $\{u^n\}_{n \geq 0}$. By the strong monotonicity~\eqref{eq:strongmonotonicity}, for any $m \geq n \geq 0$, it holds that 
\begin{align*}
\nu \norm{u^m-u^n}_Y^2 \leq \dprod{\H'(u^m)-\H'(u^n),\epsilon_{m,n}}, 
\end{align*}
with $\epsilon_{m,n}:=u^m-u^n\in X$. Combining~\eqref{eq:FH'} and~\eqref{eq:forms}, for $\delta=\delta(u)>0$, we note that  
\begin{equation}\label{eq:uK}
\H'(u)
=\F(u)
=a_\delta(u;u,\cdot)-b_\delta(u)\qquad\forall u\in K 
\end{equation}
in $X^\star$, and thus,
\begin{align*}
\nu \norm{\epsilon_{m,n}}_Y^2 & \leq a_{\delta^m}(u^m;u^m,\epsilon_{m,n})-\dprod{b_{\delta^m}(u^m),\epsilon_{m,n}} \\ & \quad -a_{\delta^n}(u^n;u^n,\epsilon_{m,n})+\dprod{b_{\delta^n}(u^n),\epsilon_{m,n}}.
\end{align*}
Using~\eqref{eq:Knew}, this further leads to 
\begin{align*}
\nu \norm{\epsilon_{m,n}}_Y^2  &\leq a_{\delta^m}(u^m;u^m-u^{m+1},\epsilon_{m,n})-a_{\delta^n}(u^n;u^n-u^{n+1},\epsilon_{m,n}).
\end{align*}
Applying~\eqref{eq:abounded} yields
\begin{align*}
\nu \norm{\epsilon_{m,n}}_Y^2 & \leq  \frac{\beta}{\delta_{\min}} \norm{\epsilon_{m,n}}_Y \left(\norm{u^m-u^{m+1}}_Y+\norm{u^n-u^{n+1}}_Y\right),
\end{align*} 
and thus
\[
\norm{\epsilon_{m,n}}_Y \leq \frac{\beta}{\nu \delta_{\min}} \left(\norm{u^m-u^{m+1}}_Y+\norm{u^n-u^{n+1}}_Y\right). 
\]
From the first step of the proof, we conclude that $\{u^n\}_{n \geq 0}$ is a Cauchy sequence in $K$. Since $K$ is a closed subset of a Banach space, the sequence $\{u^n\}_{n \geq 0}$ has a limit $\mathfrak{u} \in K$. 
\item It remains to verify that $\mathfrak{u}$ is a solution of~\eqref{eq:variationalproblem}. To this end, from~\eqref{eq:uK} and~\eqref{eq:Knew} it follows that
\begin{align*}
a_{\delta^n}(u^n;u^{n+1}-u^n,w)+\dprod{\H'(u^n),w}
=a_{\delta^n}(u^n;u^{n+1},w)-\dprod{b_{\delta^n}(u^n),w}
=0,
\end{align*}
for all $w\in X$. Recalling that $\{u^{n+1}-u^n\}_{n \geq 0}$ is a vanishing sequence in $X$, and exploiting~\eqref{eq:abounded}, we have that $a_\delta(u^n;u^{n+1}-u^n,w) \to 0$ as $n \to \infty$, for any $w\in X$. Moreover, by the weak continuity property~\eqref{eq:Hweak} of $\H'$, we obtain
\begin{align*} 
\dprod{\H'(\mathfrak{u}),w}
=\lim_{n\to0}\dprod{\H'(u^n),w} 
= 0\qquad\forall w \in X,
\end{align*}
i.e., $\mathfrak{u}$ is a solution of~\eqref{eq:variationalproblem}. Since the solution is unique thanks to Proposition~\ref{pr:varabstract} and Corollary~\ref{cor:1}, we infer that $\mathfrak{u}=u^\star$. This completes the argument.
\end{enumerate}
\end{proof}

\begin{remark} \label{rem:classical}
The classical convergence theory for the (standard) Ka\v{c}anov method requires the following key inequality to hold:
\begin{align} \label{eq:keyinequalityZ}
\G(u)-\G(v) \geq \frac{1}{2} (a(u;u,u)-a(u;v,v))\qquad \forall u,v \in K;
\end{align} 
see, e.g.,~\cite[Thm.~25.L and Eq.~(106)]{Zeidler:90}. In order to verify~\eqref{eq:keyinequalityZ} in the context of our model problem~\eqref{eq:scl}, the monotonicity assumption $(\mu 2)$ is decisive. On the contrary, the analysis in our present work is based on the bound~\eqref{eq:keyinequalityeps} which, in turn, allows to omit the monotonicity of the (nonlinear) diffusion coefficient $\mu$ in the application to quasilinear elliptic PDE~\eqref{eq:scl}; see Theorem~\ref{thm:sclconvergence} below. Furthermore, in contrast to the traditional framework, the operator $\B$ from assumption (A3) does not need to be linear in our analysis, and, in addition, the symmetry of $a(u,\cdot,\cdot)$ is no longer necessary. We remark that these improvements come at the expense of condition (K) as well as of the crucial estimate~\eqref{eq:keyinequalityeps}; in the context of quasilinear elliptic PDE~\eqref{eq:scl}, however, these assumptions do not implicate any drawback. Finally, we note that if $\B$ is linear, then~\eqref{eq:keyinequalityZ} implies \eqref{eq:keyinequalityeps} with $\gamma=\nicefrac{\alpha}{2}$.
\end{remark}

The next result states that if $\H'$ is Lipschitz continuous in the sense that there exists a constant $L_{\H}>0$ such that
\begin{align} \label{eq:lipschitz}
\dprod{\H'(u)-\H'(v),u-v}\leq L_{\H} \norm{u-v}_Y^2 \qquad\forall u,v \in K,
\end{align}
then our key assumption~\eqref{eq:keyinequalityeps} from Theorem~\ref{thm:convergenceNew} is satisfied for sufficiently small damping parameters.

\begin{corollary} \label{cor:convergence}
Assume {\rm (A1)--(A4)} and {\rm (K)}, and suppose that~\eqref{eq:lipschitz} holds. Then, we have the estimate
\begin{equation*} 
\begin{split}
\H(u^{n})-\H(u^{n+1}) 
& \geq \left(\frac{\alpha}{\delta^n}-\frac{L_\H}{2}\right) \norm{u^{n+1}-u^n}_Y^2 >0, 
\end{split}
\end{equation*}
where $\alpha>0$ is the constant from~\eqref{eq:acoercive}. In particular, if $\delta^n \in [\delta_{\min},\delta_{\max}]$, with $0<\delta_{\min} \leq \delta_{\max} < \nicefrac{2 \alpha}{L_\H}$, for all $n \geq 0$, and $\H'$ is continuous with respect to the weak topology on $X^\star$, cf.~condition~{\rm(a)} from Theorem~\ref{thm:convergenceNew}, then the damped Ka\v{c}anov iteration~\eqref{eq:fixedpointkacanovdamped} converges to the unique solution $u^\star \in K$ of~\eqref{eq:variationalproblem} for any initial guess $u^0 \in K$.
\end{corollary}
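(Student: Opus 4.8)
The plan is to derive the displayed energy decay estimate directly from the modified Ka\v{c}anov iteration~\eqref{eq:Knew} and the Lipschitz bound~\eqref{eq:lipschitz}, and then to invoke Theorem~\ref{thm:convergenceNew}. First I would write, using the scalar function $\varphi(t):=\H(u^n+t(u^{n+1}-u^n))$ as in the proof of Proposition~\ref{pr:varabstract} together with the fundamental theorem of calculus, the exact identity
\begin{align*}
\H(u^{n+1})-\H(u^n)
&=\int_0^1\dprod{\H'(u^n+t(u^{n+1}-u^n))-\H'(u^n),u^{n+1}-u^n}\dt\\
&\quad+\dprod{\H'(u^n),u^{n+1}-u^n}.
\end{align*}
The first integrand is bounded above by $L_\H\, t\,\norm{u^{n+1}-u^n}_Y^2$ by~\eqref{eq:lipschitz} (after rescaling the argument difference by $t$), so the integral contributes at most $\tfrac{L_\H}{2}\norm{u^{n+1}-u^n}_Y^2$.

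The key point is then to identify $\dprod{\H'(u^n),u^{n+1}-u^n}$. By~\eqref{eq:FH'} we have $\H'(u^n)=\A(u^n)u^n-b(u^n)=a(u^n;u^n,\cdot)-b(u^n)$ in $X^\star$, and since $u^{n+1}-u^n\in X$ (property (K)), I can test~\eqref{eq:Knew} with $v=u^{n+1}-u^n$. Unwinding the definitions~\eqref{eq:forms} of $a_{\delta^n}$ and $b_{\delta^n}$ gives $a(u^n;u^n,u^{n+1}-u^n)-\dprod{b(u^n),u^{n+1}-u^n}=\tfrac{1}{\delta^n}a(u^n;u^{n+1}-u^n,u^{n+1}-u^n)$ — wait, I should be careful with signs here; the iteration reads $a_{\delta^n}(u^n;u^{n+1},v)=\tfrac{1}{\delta^n}a(u^n;u^n,v)-\dprod{\F(u^n),v}$, so $\dprod{\F(u^n),v}=\tfrac{1}{\delta^n}a(u^n;u^n-u^{n+1},v)$, and with $v=u^{n+1}-u^n$ this yields $\dprod{\H'(u^n),u^{n+1}-u^n}=-\tfrac{1}{\delta^n}a(u^n;u^{n+1}-u^n,u^{n+1}-u^n)\le-\tfrac{\alpha}{\delta^n}\norm{u^{n+1}-u^n}_Y^2$ by the coercivity~\eqref{eq:acoercive}. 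Combining the two bounds gives $\H(u^{n+1})-\H(u^n)\le\big(\tfrac{L_\H}{2}-\tfrac{\alpha}{\delta^n}\big)\norm{u^{n+1}-u^n}_Y^2$, i.e. the claimed estimate; the strict positivity $>0$ (when $u^{n+1}\ne u^n$) follows from $\delta^n<\nicefrac{2\alpha}{L_\H}$, and if $u^{n+1}=u^n$ the iteration has reached a fixed point, which one checks solves~\eqref{eq:variationalproblem}.

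For the convergence conclusion I would simply verify the hypotheses of Theorem~\ref{thm:convergenceNew}: condition (a) is assumed outright, and for condition (b) I set $\gamma:=\tfrac{\alpha}{\delta_{\max}}-\tfrac{L_\H}{2}$, which is a positive constant independent of $n$ precisely because $\delta_{\max}<\nicefrac{2\alpha}{L_\H}$, and $\delta^n\le\delta_{\max}$ gives $\tfrac{\alpha}{\delta^n}-\tfrac{L_\H}{2}\ge\gamma$; together with $\delta^n\ge\delta_{\min}>0$ this is exactly~\eqref{eq:keyinequalityeps}. Theorem~\ref{thm:convergenceNew} then delivers convergence to $u^\star$.

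The only genuinely delicate step is the algebraic bookkeeping in the second paragraph — keeping the sign of $\rho^n$ and the factor $\tfrac{1}{\delta^n}$ straight when passing between the fixed-point form~\eqref{eq:fixedpointkacanovdamped}, the weak form~\eqref{eq:Knew}, and the relation~\eqref{eq:FH'} — since a sign slip there would flip the inequality. Everything else is the by-now-standard Taylor/fundamental-theorem-of-calculus argument already used for Proposition~\ref{pr:varabstract}, so I do not anticipate any real obstacle beyond careful computation.
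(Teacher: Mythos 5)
Your argument is correct and follows essentially the same route as the paper's proof: the fundamental-theorem-of-calculus splitting of $\H(u^{n+1})-\H(u^n)$, the Lipschitz bound $\tfrac{L_\H}{2}\norm{u^{n+1}-u^n}_Y^2$ on the integral term, the substitution of the iteration~\eqref{eq:Knew} to identify $\dprod{\H'(u^n),u^{n+1}-u^n}=-\tfrac{1}{\delta^n}a(u^n;u^{n+1}-u^n,u^{n+1}-u^n)$, coercivity, and then Theorem~\ref{thm:convergenceNew} with $\gamma=\tfrac{\alpha}{\delta_{\max}}-\tfrac{L_\H}{2}$. Your sign bookkeeping is right, and your aside on the strict inequality (requiring $\delta^n<\nicefrac{2\alpha}{L_\H}$ and $u^{n+1}\neq u^n$) is if anything slightly more careful than the paper's statement.
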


\begin{proof}
Our argument follows along the lines of the proof of~\cite[Thm.~2.6]{HeidWihler:2020a}, however, with a different bilinear form on account of the present iteration scheme~\eqref{eq:Knew}. Similarly as in the proof of Proposition~\ref{pr:varabstract}, we define the scalar function $\varphi(t):=\H(u^n+t(u^{n+1}-u^n))$, for $t \in [0,1]$ and $n \geq 0$. Then, we find that
\begin{align}
\H(u^{n+1})-\H(u^n) 
&=\int_0^1 \dprod{\H'(u^n +t(u^{n+1}-u^n)),u^{n+1}-u^n} \dt\label{eq:Hint}\\
&= \int_0^1 \dprod{\H'(u^n +t(u^{n+1}-u^n))-\H'(u^n),u^{n+1}-u^n} \dt\nonumber\\
&\quad +\dprod{\H'(u^n),u^{n+1}-u^n}.\nonumber
\end{align}
Hence, by invoking the Lipschitz continuity~\eqref{eq:lipschitz}, the identity~\eqref{eq:uK}, and the modified Ka\v{c}anov scheme~\eqref{eq:Knew}, we obtain 
\begin{align*}
\H(u^{n+1})&-\H(u^n)\\ 
&\leq \frac{L_\H}{2} \norm{u^{n+1}-u^{n}}_Y^2  +a_{\delta^n}(u^n;u^n,u^{n+1}-u^n)
-\dprod{b_{\delta^n}(u^n),u^{n+1}-u^n}\\
&= \frac{L_\H}{2} \norm{u^{n+1}-u^{n}}_Y^2 -a_{\delta^n}(u^n;u^{n+1}-u^n,u^{n+1}-u^n).
\end{align*}
Furthermore, employing the coercivity assumption~\eqref{eq:coerciveC}, it follows that
\begin{align*}
\H(u^{n+1})-\H(u^n) 
& \leq \left(\frac{L_\H}{2} - \frac{\alpha}{\delta^n}\right) \norm{u^{n+1}-u^{n}}_Y^2.
\end{align*}
Moreover, if $\delta^n \leq \delta_{\max}$ for all $n \geq 0$, then we further deduce the bound
\begin{equation} \label{eq:keyinequality3}
\begin{split}
\H(u^{n})-\H(u^{n+1}) 
& \geq \left(\frac{\alpha}{\delta_{\max}}-\frac{L_\H}{2}\right) \norm{u^{n+1}-u^n}_Y^2 
\end{split}.
\end{equation} 
If $\delta_{\max}<\nicefrac{2\alpha}{L_\H}$, then $\left(\nicefrac{\alpha}{\delta_{\max}}-\nicefrac{L_\H}{2}\right)>0$, and, in turn, Theorem~\ref{thm:convergenceNew} implies the convergence of the sequence $\{u^n\}_{n \geq 0}$ to $u^\star$.
\end{proof}

\begin{remark} \label{rm:contraction}
Applying the abstract analysis in~\cite{HeidPraetoriusWihler:2020}, given the assumptions of Theorem~\ref{thm:convergenceNew}, it can be shown that the iterates generated by the modified Ka\v{c}anov scheme~\eqref{eq:Knew} satisfy the contraction property 
\[
\H(u^{n+1})-\H(u^\star) \leq q \left(\H(u^n)-\H(u^\star)\right) \qquad\forall n\ge 0,
\]
for some constant $0<q<1$, where $u^\star$ is the solution of~\eqref{eq:variationalproblem}. In particular, in view of Corollary~\ref{cor:convergence} with a constant damping parameter $\delta=\delta^n \in\left(0,\nicefrac{2 \alpha}{L_\H}\right)$ for all $n\ge 0$, we have that 
\begin{align*}
q(\delta)=\left(1-\frac{2\delta\nu^2 \left(\alpha-\nicefrac{\delta L_\H}{2}\right)}{\beta^2 L_{\H}}\right),
\end{align*}
cf.~\cite[Thm.~2.1]{HeidPraetoriusWihler:2020}. By taking the derivative with respect to $\delta$, it follows immediately that the minimum is attained at $\delta=\nicefrac{\alpha}{L_\H}$; noticing that the derivation of $q$ involves some rough estimates, however, this choice is typically suboptimal with regards to the convergence rate.  
\end{remark}

\section{Adaptive step size control} \label{sec:stepsize}

In this section, we will present two adaptive methods for selecting the damping parameter $\delta^n$ in the modified Ka\v{c}anov iteration~\eqref{eq:Knew}. To this end, recall the key inequality~\eqref{eq:keyinequalityeps} from Theorem~\ref{thm:convergenceNew}, and set $\delta_{\max}:=\nicefrac{\alpha}{L_{\H}}$ in~\eqref{eq:keyinequality3} (which is a possibly pessimistic choice as mentioned in Remark~\ref{rm:contraction}); then \eqref{eq:keyinequalityeps} holds for $\gamma=\nicefrac{L_\H}{2}$. Alternatively, from Remark~\ref{rem:classical}, we recall that within the setting of the classical Ka\v{c}anov scheme, i.e., for $\delta\equiv 1$, the bound~\eqref{eq:keyinequalityeps} can be shown for the constant $\gamma=\nicefrac{\alpha}{2}$ under more restrictive assumptions on the nonlinearity. This observation may suggest that a smaller choice of $\gamma$ potentially relates to a larger size of the damping parameter. We thus propose that the sequence $\{u^n\}_{n\ge 0}$ is required to satisfy an estimate of the form
\begin{align} \label{eq:lowerbound}
\H(u^{n})-\H(u^{n+1}) \geq \theta  \min \left\{ \alpha,L_\H \right\}\norm{u^{n+1}-u^n}_Y^2,
\end{align}
for a constant $0<\theta\le \nicefrac12$, which still guarantees the convergence of the modified Ka\v{c}anov scheme~\eqref{eq:Knew} in regard to Theorem~\ref{thm:convergenceNew} without imposing an upper bound on the damping parameter. In our numerical experiments in \S\ref{sec:numexp}, we let $\theta=0.1$. Moreover, in order to prevent too small steps, we set $\delta_{\min}:=\nicefrac{\alpha}{4 L_\H}$, which, in view of Remark~\ref{rm:contraction}, is a reasonable choice. We emphasize that the constants $\alpha$ and $L_\H$ must both be known a priori; in particular, we  assume that $\H'$ is Lipschitz continuous as proposed in~\eqref{eq:lipschitz}.

The two adaptive step size procedures to be presented below both pursue a similar strategy, namely, to maximize the difference $\H(u^n)-\H(u^{n+1})$ in each step by choosing an appropriate step size $\delta^n=\delta(u^n) \geq \delta_{\min}$. Indeed, recalling that $u^\star$ is the unique minimizer of $\H$ in $K$, it seems obvious that a maximal decay of the functional $\H$ along the sequence $\{u^n\}_{n \geq 0}$ will potentially accelerate the convergence of $\{u^n\}_{n \geq 0}$ to $u^\star$.

\subsection{Step size control via Taylor expansion} \label{sec:sstaylor}

We begin by recalling~\eqref{eq:Hint}, which in regard to~\eqref{eq:FH'}, can be stated as
\begin{align} \label{eq:hdfifftaylor}
\H(u^{n+1})-\H(u^n)=\int_0^1\dprod{\F(u^n +t(u^{n+1}-u^n)),u^{n+1}-u^n}\dt;
\end{align}
in particular, in view of the discussion above, we aim to maximize the integral on the right-hand side. For that purpose, we will employ a Taylor expansion of the integrand at $t=0$, provided that $\F:\,K \to Y^\star$ from~\eqref{eq:FH'} is Fr\'{e}chet differentiable. Specifically, let us first define the (continuously differentiable) function
\[
\psi_n(t):=\dprod{\F(u^n +t(u^{n+1}-u^n)),u^{n+1}-u^n},\qquad t\in[0,1].
\]
Then, if the difference $u^{n+1}-u^n$ is sufficiently small, applying a Taylor expansion at $t=0$ yields
\begin{align*}
\psi_n(t) \approx \psi_n(0)+\psi_n'(0)t=\dprod{\F(u^n),u^{n+1}-u^n}+t\dprod{\F'(u^n)(u^{n+1}-u^n),u^{n+1}-u^n}.
\end{align*} 
Since~\eqref{eq:fixedpointkacanovdamped} implies that
$
u^{n+1}-u^n=-\delta(u^n) \rho^n$, for each $n\ge 0$,
we obtain
\begin{align} \label{eq:psiapprox}
\psi_n(t) \approx -\delta(u^n)\dprod{\F(u^n),\rho^n}+t \delta(u^n)^2\dprod{\F'(u^n)\rho^n,\rho^n},
\end{align}
where we have exploited the fact that the Fr\'{e}chet derivative $\F'(u^n)$ is a linear operator. Hence, by recalling~\eqref{eq:hdfifftaylor} and integrating~\eqref{eq:psiapprox} from $t=0$ to $t=1$, we find that
\begin{align*}
\H(u^{n})-\H(u^{n+1}) \approx \delta(u^n)\dprod{\F(u^n),\rho^n}-\frac{\delta(u^n)^2}{2}\dprod{\F'(u^n)\rho^n,\rho^n}. 
\end{align*}
Then, a simple calculation reveals that the right-hand side is maximized for the damping parameter
\begin{align} \label{eq:dptaylor}
\delta^n=\delta(u^n):=\frac{\dprod{\F(u^n),\rho^n}}{\dprod{\F'(u^n)\rho^n,\rho^n}}.
\end{align}
In account of \eqref{eq:lowerbound} and the lower bound $\delta_{\min}=\nicefrac{\alpha}{4 L_\H}$, this leads to the step size Algorithm~\ref{alg:Taylor}. We note that the stopping criterion in line 6 will be satisfied once the damping parameter is small enough, cf.~Corollary~\ref{cor:convergence}, i.e., the procedure terminates after finitely many steps; indeed, the stopping criterion is certainly satisfied once we reach $\delta^n=\delta_{\min}$. Moreover, we underline that the derivative $\F'(u^n)$ must be available in the step size Algorithm~\ref{alg:Taylor}, cf.~\eqref{eq:dptaylor}.

\begin{algorithm}
\caption{Step size control via Taylor expansion}
\label{alg:Taylor}
\begin{flushleft} 
\textbf{Input:} Given $u^n \in K$, a correction factor $\sigma \in (\nicefrac{1}{2},1)$, and a parameter $\theta\in (0,\nicefrac{1}{2}]$. 
\end{flushleft}
\begin{algorithmic}[1]
\State Solve the linear problem $\A(u^n)\rho^n=\F(u^n)$ for $\rho^n\in X$, cf.~\eqref{eq:Ktrad2}.
\State Compute $\delta^n$ from~\eqref{eq:dptaylor} and set $\delta^n\gets\max \left\{\delta_{\min},\delta^n\right\}$.
\Repeat  	
\State Compute $u^{n+1}:=u^{n}-\delta^n\rho^n$, cf.~\eqref{eq:fixedpointkacanovdamped}.
\State Set $ \delta^n \gets \max\{\sigma \delta^n,\delta_{\min}\}$.
\Until {$\H(u^{n})-\H(u^{n+1}) \ge \theta\min\left\{\alpha,L_\H\right\}\norm{u^{n+1}-u^n}_Y^2$}\\ 
\Return $u^{n+1}$.
\end{algorithmic}
\end{algorithm}

\subsection{Step size control via a prediction-correction strategy} \label{sec:ssdp}

We will present a second adaptive damping parameter selection procedure that is partially based on ideas from~\cite[\S 3.1]{Deuflhard:04}. This strategy is more `ad hoc' than the Taylor expansion approach above, however, it does not require the differentiability of the operator $\F=\H'$, cf.~\eqref{eq:FH'}. The idea is fairly straightforward: For a given correction factor $\sigma \in (\nicefrac{1}{2},1)$ and damping factor $\delta >0$, we compare the energy decay for the damping parameters $\delta$ and $\delta'=\sigma^p \delta$, where $p \in \{-1,1\}$ depends on the previous step; we note that $p=-1$ yields an increased step size, whereas $p=1$ decreases the damping parameter. If applying $\delta'$ results in a larger energy decay, then we choose the damping parameter to be $\delta'$ in the present and subsequent steps, with $p$ unchanged; otherwise, if $\delta$ outperforms $\delta'$, then $\delta$ is retained, however, in the next step we replace $p$ by $-p$. This leads to Algorithm~\ref{alg:PreCor}.

\begin{algorithm}
\caption{Step size control via prediction-correction strategy}
\label{alg:PreCor}
\begin{flushleft} 
\multiline{\textbf{Input:} Given $u^n \in K$, a damping parameter $\delta \geq \delta_{\min}$, an exponent $p \in \{-1,1\}$, a correction factor $\sigma \in (\nicefrac{1}{2},1)$, and a parameter $\theta\in (0,\nicefrac{1}{2}]$. }
\end{flushleft}
\begin{algorithmic}[1]
\State Let $C:=\theta\min\left\{\alpha,L_H\right\}$.
\State Solve the linear problem $\A(u^n)\rho^n=\F(u^n)$ for $\rho^n\in X$, cf.~\eqref{eq:Ktrad2}.
\If {$p=1$ and $\delta< \sigma^{-1}\delta_{\min}$}
\State Set $p \gets -1$.
\EndIf
\State Set $\delta':=\sigma^p \delta$ and compute $\widetilde u^{n+1}:=u^{n}-\delta'\rho^n$, cf.~\eqref{eq:fixedpointkacanovdamped}.
\If {$\H(u^{n})-\H(\widetilde{u}^{n+1}) \geq C \norm{u^{n}-\widetilde{u}^{n+1}}_Y^2$}
\State Compute $u^{n+1}:=u^{n}-\delta\rho^n$, cf.~\eqref{eq:fixedpointkacanovdamped}.
\If {$\H(\widetilde{u}^{n+1}) \leq \H(u^{n+1})$ or $\H(u^{n})-\H(u^{n+1}) < C\norm{u^{n}-u^{n+1}}_Y^2$}
\State Set $\delta\gets\delta'$ and $u^{n+1}\gets\widetilde{u}^{n+1}$.
\Else
\State Set $p\gets-p$.
\EndIf
\Else
\State Set $p\gets1$, $\delta\gets\delta'$, and $u^{n+1}:=\widetilde{u}^{n+1}$.
\While {$\H(u^{n})-\H(u^{n+1}) < C\norm{u^{n+1}-u^n}_Y^2$}	
\State Set $\delta\gets\sigma \delta$ and compute $u^{n+1}:=u^{n}-\delta\rho^n$, cf.~\eqref{eq:fixedpointkacanovdamped}.
\EndWhile
\EndIf \\
\Return $\delta$, $u^{n+1}$, and $p$.
\end{algorithmic}
\end{algorithm}

\section{Application to quasilinear diffusion models} \label{sec:dampedkacanov}

In this section, we discuss the weak formulations of the boundary value problem~\eqref{eq:scl} as well as of the Ka\v{c}anov iteration scheme~\eqref{eq:kacanovs}. In addition, an equivalent variational setting will be established. Furthermore, a series of numerical experiments in the framework of finite element discretizations will be presented.

\subsection{Sobolev spaces}

Let $Y:=\Hone(\Omega)$ be the standard Sobolev space of $\Ltwo(\Omega)$-functions with weak derivatives in $\Ltwo(\Omega)$. We endow $Y$ with the inner product 
\[
(u,v)_Y:=
\int_\Omega \nabla u \cdot \nabla v \dx+\int_\Omega uv \dx, \qquad u,v \in Y,
\]
and with the induced $\Hone$-norm $\norm{u}_Y:=\sqrt{(u,u)_Y}$, $u \in Y$. 
Moreover, consider the closed linear subspace $X:= \{w \in Y: w|_{\Gamma_1} = 0\} \subset Y$, where $w|_{\Gamma_1}$ denotes the trace of $w \in Y$ on the (non-vanishing) Dirichlet boundary part $\Gamma_1 \subset \partial \Omega$. We equip $X$ with the $\Hone$-seminorm $\norm{u}_X:=\norm{\nabla u}_{\Ltwo(\Omega)}$, for $u \in X$; owing to the Poincar\'e-Friedrichs inequality, we note that the norm $\|\cdot\|_X$ is equivalent to the norm $\norm{\cdot}_Y$ on $X$, i.e., there exists a constant $c>0$ such that $c \norm{u}_Y \leq \norm{u}_X \leq  \norm{u}_Y$ for all $u \in X$.
Finally, we consider the closed and convex subset 
\begin{equation}\label{eq:KPDE}
K:=\{w \in Y: w|_{\Gamma_1}=g \ \text{on } \Gamma_1\}\subset Y,
\end{equation}
with $g$ the Dirichlet boundary data from~\eqref{eq:kacanovsdirichlet}. Evidently, $K$ has property (K). In particular, if $\Gamma_1=\partial \Omega$ and $g\equiv 0$ on $\partial \Omega$, then  we may consider $Y=X=K=\Hone_0(\Omega)$ with the norm $\|\cdot\|_X$.

\subsection{Weak formulations} \label{sec:kacanovscl}

For any given $u \in K$, we define a (symmetric) bilinear form $a(u;\cdot,\cdot)$ on $Y \times Y$ by
\begin{align} \label{eq:sclbilinear}
a(u;v,w):=\int_\Omega \muf{u}\nabla v \cdot \nabla w \dx,\qquad v,w \in Y.
\end{align}
Moreover, we introduce the linear functional
\begin{equation}\label{eq:b}
\dprod{b,v}:=\int_\Omega fv \dx-\int_{\Gamma_2} hv \dx,\qquad v\in X,
\end{equation}
where $\dprod{\cdot,\cdot}$ denotes the duality pairing in $X^\star \times X$, with $X^\star$ signifying the dual space of $X$. If the source function $f \in \Ltwo(\Omega)$ and the Neumann boundary data $h \in \Ltwo(\partial \Omega)$, then we notice that $b \in X^\star$; incidentally, more general assumptions on the data can be made, see, e.g., \cite[Rem.~25.32]{Zeidler:90}. 

In terms of the above forms, the weak formulation of \eqref{eq:scl} reads as follows:
\begin{equation}\label{eq:weak}
\text{Find }u \in K: \qquad a(u;u,v)=\dprod{b,v} \qquad \forall v \in X.
\end{equation}
Furthermore, for given $u^n \in K$, $n\ge 0$, the weak form of the Ka\v{c}anov scheme~\eqref{eq:kacanovs} is to find $u^{n+1}\in K$ such that
\[
a(u^n;u^{n+1},v)=\dprod{b,v} \qquad \forall v \in X.
\]


The ensuing result follows from standard arguments.

\begin{proposition} \label{prop:acoercivebounded}\mbox{}
If the diffusion coefficient $\mu$ satisfies {\rm ($\mu$3)}, then the form $a(\cdot;\cdot,\cdot)$ from \eqref{eq:sclbilinear} is bounded in the sense that
\[
|a(u;v,w)| \leq M_\mu \norm{v}_Y \norm{w}_Y \qquad\forall u\in K,~\forall v,w \in Y.
\]
Moreover, there exists a constant $\alpha >0$ such that, for any $u \in K$, we have the coercivity property
\begin{align} \label{eq:alpha1}
a(u;v,v) \geq \alpha \norm{v}^2_Y \qquad\forall v \in X,
\end{align}
and, especially,
\begin{align} \label{eq:alpha2}
a(u;v-w,v-w) \geq \alpha \norm{v-w}^2_Y \qquad\forall v,w \in K.
\end{align}
\end{proposition}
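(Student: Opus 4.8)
The plan is to verify the two asserted estimates directly from the definition~\eqref{eq:sclbilinear} of the bilinear form, using the pointwise bounds on $\mu$ from assumption~($\mu$3) together with the Poincar\'e--Friedrichs inequality on $X$.

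First I would prove boundedness. For any $u\in K$ and $v,w\in Y$, I apply the pointwise upper bound $\muf{u}\le M_\mu$ inside the integral in~\eqref{eq:sclbilinear}, followed by the Cauchy--Schwarz inequality in $\Ltwo(\Omega)$:
\[
|a(u;v,w)|\le M_\mu\int_\Omega|\nabla v||\nabla w|\dx\le M_\mu\norm{\nabla v}_{\Ltwo(\Omega)}\norm{\nabla w}_{\Ltwo(\Omega)}\le M_\mu\norm{v}_Y\norm{w}_Y,
\]
where the last step uses $\norm{\nabla v}_{\Ltwo(\Omega)}\le\norm{v}_Y$, which is immediate from the definition of the $\Hone$-norm on $Y$.

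Next I would prove coercivity on $X$. For $v\in X$, the lower bound $\muf{u}\ge m_\mu$ gives
\[
a(u;v,v)=\int_\Omega\muf{u}|\nabla v|^2\dx\ge m_\mu\norm{\nabla v}_{\Ltwo(\Omega)}^2=m_\mu\norm{v}_X^2.
\]
Now I invoke the norm equivalence on $X$ recorded earlier: there is a constant $c>0$ with $c\norm{v}_Y\le\norm{v}_X$ for all $v\in X$, so $\norm{v}_X^2\ge c^2\norm{v}_Y^2$. Setting $\alpha:=m_\mu c^2>0$ yields~\eqref{eq:alpha1}. Finally,~\eqref{eq:alpha2} follows by applying~\eqref{eq:alpha1} to $v-w$, after noting that for $v,w\in K$ the difference $v-w$ lies in $X$, since both traces on $\Gamma_1$ equal $g$ and hence cancel; this is precisely property~(K) for the set $K$ from~\eqref{eq:KPDE}.

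There is no real obstacle here: the proof is a routine chain of H\"older estimates plus the Poincar\'e--Friedrichs inequality. The only point requiring a moment's care is to keep track of where the constant $c$ from the norm equivalence enters, so that the stated $\alpha$ is genuinely independent of $u$ — which it is, since $m_\mu$ and $c$ depend only on $\mu$ and on $\Omega,\Gamma_1$, respectively, and not on the argument $u\in K$.
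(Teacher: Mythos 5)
Your proof is correct and is precisely the ``standard argument'' the paper alludes to without writing out: boundedness from the pointwise upper bound $M_\mu$ plus Cauchy--Schwarz, coercivity from the lower bound $m_\mu$ combined with the Poincar\'e--Friedrichs norm equivalence on $X$, and \eqref{eq:alpha2} obtained by applying \eqref{eq:alpha1} to $v-w\in X$ via property (K). Nothing further is needed.
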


%

\subsection{Variational framework}\label{sec:var}

We introduce the (nonlinear) functional $\G:\,K \to  \mathbb{R}$ by
\begin{equation}\label{eq:G}
\G(u):=\int_\Omega \psi\big(\left|\nabla u\right|^2\big) \dx,\qquad
\text{with}\quad 
\psi(s):=\frac12\int_0^s \mu(t) \dt,\quad s\ge 0. 
\end{equation}
For~$u\in K$, the G\^ateaux derivative of $\G$ is given by
\begin{align} \label{eq:Gderivativescl}
\dprod{\G'(u),v}
&=\int_\Omega 2 \psi'\big(\left|\nabla u\right|^2\big)\nabla u \cdot \nabla v \dx
=\int_\Omega \muf{u} \nabla u \cdot \nabla v \dx,
\end{align}
for all $v\in X$, i.e., $\G'(u)=a(u;u,\cdot)$ in $X^\star$. 

Now, introduce the (energy) potential $\H:\, K \to \mathbb{R}$ by 
\begin{equation}\label{eq:HPDE}
\H(u):=\G(u)-\dprod{b,u},
\end{equation} 
with $\G$ and $b$ from~\eqref{eq:G} and~\eqref{eq:b}, respectively. If the diffusion coefficient $\mu$ satisfies the estimates
\begin{align} \label{eq:muassumption}
m_\mu(t-s) \leq \mu(t^2)t-\mu(s^2)s \leq M_\mu (t-s), \qquad t \geq s \geq 0,
\end{align}
then the Lipschitz condition~\eqref{eq:lipschitz} and the strong monotonicity property~\eqref{eq:strongmonotonicity} can be deduced with $\nu=m_\mu$ and $L_\H=3 M_\mu$, cf.~\cite[Prop.~25.26]{Zeidler:90}. 

\begin{remark}\label{rem:computeconst}
We comment on the assumption~\eqref{eq:muassumption}:
\begin{enumerate}[(a)]
\item
It is easily shown that {\rm ($\mu$1)--($\mu$4)} implies~\eqref{eq:muassumption}, however, we emphasize that the latter assumption does not require $\mu$ to be decreasing nor differentiable. Yet, if $\mu$ is differentiable, then \eqref{eq:muassumption} implies ($\mu3$) and ($\mu4$).
\item If the diffusion coefficient $\mu$ is continuously differentiable, then we can (easily) compute the bounds in~\eqref{eq:muassumption} by taking into account the mean value theorem. In particular, we may set
\begin{align*}
m_\mu=\inf_{t \geq 0} \xi'(t) \qquad \text{and} \qquad M_\mu=\sup_{t \geq 0} \xi'(t),
\end{align*}
where $\xi(t)=\mu(t^2)t$.
\item Recall from {\rm($\mu4$)} that the continuous function $\phi(t)$, cf.~\eqref{eq:orlicz}, is strictly convex and increasing for $t\ge 0$ by {\rm($\mu3$)} with $\phi(0)=0$; we note that these properties relate to the class of Orlicz functions. In this aspect, our work links to the more general context of Orlicz type nonlinearities which have been studied, for instance, in~\cite{Diening:07}.
\end{enumerate}
\end{remark}

The following result is a direct consequence of Corollary~\ref{cor:1}.

\begin{proposition}
Suppose that the diffusion coefficient $\mu$ satisfies~\eqref{eq:muassumption}. Then, the functional $\H$ from~\eqref{eq:HPDE} has a unique minimizer $u^\star\in K$, cf.~\eqref{eq:KPDE}, which satisfies the weak formulation 
\begin{equation}\label{eq:variationalproblem1}
\int_\Omega \muf{u} \nabla u \cdot \nabla v \dx
=\int_\Omega fv \dx-\int_{\Gamma_2} hv \dx
\qquad\forall v\in X;
\end{equation}
i.e., $u^\star$ is the unique (weak) solution of~\eqref{eq:weak}.
\end{proposition}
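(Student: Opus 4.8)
The plan is to invoke Corollary~\ref{cor:1} directly, once we have verified that all its hypotheses are met in the present concrete setting. Recall that Corollary~\ref{cor:1} requires: (i) the subset $K$ has property (K); (ii) the functional $\H$ is G\^ateaux continuously differentiable on $K$; and (iii) $\H'$ is strongly monotone in the sense of~\eqref{eq:strongmonotonicity}. The first point was already observed immediately below~\eqref{eq:KPDE}: the difference set $X=\{u-v:\,u,v\in K\}$ coincides with the closed linear subspace $\{w\in Y:\,w|_{\Gamma_1}=0\}$, and $x+y\in K$ whenever $x\in X$, $y\in K$, since traces add. For the second and third points, I would appeal to the discussion following~\eqref{eq:muassumption}: under the hypothesis~\eqref{eq:muassumption}, the cited result~\cite[Prop.~25.26]{Zeidler:90} yields that $\H'$ satisfies both the Lipschitz bound~\eqref{eq:lipschitz} (with $L_\H=3M_\mu$) and the strong monotonicity~\eqref{eq:strongmonotonicity} (with $\nu=m_\mu$); in particular $\H$ is G\^ateaux continuously differentiable, with derivative $\dprod{\H'(u),v}=a(u;u,v)-\dprod{b,v}$ obtained by combining~\eqref{eq:Gderivativescl} and~\eqref{eq:HPDE}.

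With these verifications in place, Proposition~\ref{pr:varabstract} gives the existence of a unique minimizer $u^\star\in K$ of $\H$, and Corollary~\ref{cor:1} states that this minimizer satisfies $\dprod{\H'(u^\star),v}=0$ for all $v\in X$. Unwinding the definition of $\H'$ via~\eqref{eq:Gderivativescl} and~\eqref{eq:b}, this identity reads precisely
\[
\int_\Omega \muf{u^\star}\nabla u^\star\cdot\nabla v\dx
=\int_\Omega f v\dx-\int_{\Gamma_2}hv\dx
\qquad\forall v\in X,
\]
which is~\eqref{eq:variationalproblem1}. Since the left-hand side equals $a(u^\star;u^\star,v)$ and the right-hand side equals $\dprod{b,v}$, this is exactly the weak formulation~\eqref{eq:weak}; moreover the solution is unique because the minimizer of $\H$ is unique and, conversely, any solution of~\eqref{eq:weak} is a critical point of the strictly convex functional $\H$ (again by strong monotonicity), hence must equal $u^\star$.

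There is essentially no serious obstacle here: the statement is a corollary in the literal sense, and the only thing to check carefully is the chain of hypotheses, with the single slightly nontrivial point being that~\eqref{eq:muassumption} does indeed deliver~\eqref{eq:strongmonotonicity} and~\eqref{eq:lipschitz} — but this is already asserted in the text preceding the proposition with a precise reference. I would therefore keep the proof short, merely pointing to~\cite[Prop.~25.26]{Zeidler:90} for the monotonicity and Lipschitz properties of $\H'$, noting that property (K) for $K$ was established above, and then citing Proposition~\ref{pr:varabstract} and Corollary~\ref{cor:1} to conclude. One minor subtlety worth a remark is the uniqueness claim in~\eqref{eq:weak}: strictly speaking Corollary~\ref{cor:1} only says the \emph{minimizer} solves the variational problem, so I would add the one-line converse argument that strong monotonicity forces any solution of~\eqref{eq:variationalproblem1} to coincide with $u^\star$ (test the difference of two solutions against itself).
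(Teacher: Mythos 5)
Your proposal is correct and follows exactly the route the paper intends: the paper itself introduces the proposition with ``The following result is a direct consequence of Corollary~\ref{cor:1}'', relying on the verification of (K), on $\G'(u)=a(u;u,\cdot)$ from~\eqref{eq:Gderivativescl}, and on \cite[Prop.~25.26]{Zeidler:90} for the strong monotonicity of $\H'$ under~\eqref{eq:muassumption}, all of which you check. Your added one-line uniqueness argument for solutions of~\eqref{eq:weak} (testing the difference of the residuals $\H'(u_1)-\H'(u_2)$ against $u_1-u_2$ and using~\eqref{eq:strongmonotonicity}) is a harmless and correct supplement.
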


%

\subsection{Convergence of the modified Ka\v{c}anov method}


Recall that the properties (A1)--(A4) as well as (K) are satisfied if the diffusion coefficient obeys the bounds~\eqref{eq:muassumption} by our analysis in the previous sections~\S\ref{sec:kacanovscl} and \S\ref{sec:var}, see, in particular, Proposition~\ref{prop:acoercivebounded} and~\eqref{eq:Gderivativescl}. Hence, the assumptions for the convergence results, cf.~Theorem~\ref{thm:convergenceNew} and Corollary~\ref{cor:convergence}, are fulfilled in the context of the quasilinear elliptic PDE~\eqref{eq:scl}, \emph{without} assuming ($\mu$2).

\begin{theorem} \label{thm:sclconvergence}
Assume that the diffusion coefficient satisfies the bounds~\eqref{eq:muassumption}. Then, the damped Ka\v{c}anov method~\eqref{eq:Knew}, with $\delta:K \to [\delta_{\min},\delta_{\max}]$ and $0<\delta_{\min}\leq \delta_{\max} < \nicefrac{2 \alpha}{3 M_\mu}$, converges to the unique weak solution $u^\star \in K$ of~\eqref{eq:variationalproblem1}.
\end{theorem}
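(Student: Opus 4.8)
The plan is to verify that Theorem~\ref{thm:sclconvergence} is an immediate consequence of Corollary~\ref{cor:convergence} applied to the concrete variational setting of \S\ref{sec:kacanovscl}--\S\ref{sec:var}. The work therefore consists almost entirely of checking hypotheses rather than proving anything new.

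First I would recall that, under the bounds~\eqref{eq:muassumption}, the abstract assumptions (A1)--(A4) and (K) all hold: property (K) for the affine set $K$ in~\eqref{eq:KPDE} was already noted; (A1)--(A2) follow from the boundedness and coercivity of the bilinear form $a(u;\cdot,\cdot)$ established in Proposition~\ref{prop:acoercivebounded} (using that~\eqref{eq:muassumption} entails ($\mu$3) with constants $m_\mu,M_\mu$); (A3) holds with $\G$ from~\eqref{eq:G}, whose G\^ateaux derivative is $a(u;u,\cdot)$ by~\eqref{eq:Gderivativescl}, and $\B(u):=\dprod{b,u}$ linear with $\B'(u)=b$; and (A4), the strong monotonicity of $\H'=\G'-b$, follows from~\eqref{eq:muassumption} with $\nu=m_\mu$, as cited after~\eqref{eq:muassumption}. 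In the same breath, the Lipschitz condition~\eqref{eq:lipschitz} holds with $L_\H=3M_\mu$. Second, I would check hypothesis~(a) of Theorem~\ref{thm:convergenceNew}, the weak continuity of $\H'$ on $X^\star$: for a strongly convergent sequence $z^n\to z^\star$ in $K\subset\Hone(\Omega)$, the Nemytskii-type map $\nabla z\mapsto \mu(|\nabla z|^2)\nabla z$ is continuous from $\Ltwo(\Omega)^d$ to $\Ltwo(\Omega)^d$ under~\eqref{eq:muassumption} (it is globally Lipschitz in $\nabla z$), so $\dprod{\H'(z^n),w}=\int_\Omega \mu(|\nabla z^n|^2)\nabla z^n\cdot\nabla w\dx\to\int_\Omega\mu(|\nabla z^\star|^2)\nabla z^\star\cdot\nabla w\dx=\dprod{\H'(z^\star),w}$ for every $w\in X$; in fact, since $\H'$ is even Lipschitz in the $Y$-norm, $\dprod{\H'(z^n)-\H'(z^\star),w}$ is bounded by $L_\H\|z^n-z^\star\|_Y\|w\|_Y\to0$, which settles~(a) at once.

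With all hypotheses in place, I would simply invoke Corollary~\ref{cor:convergence}: since $\delta^n\in[\delta_{\min},\delta_{\max}]$ with $0<\delta_{\min}\le\delta_{\max}<\nicefrac{2\alpha}{L_\H}=\nicefrac{2\alpha}{3M_\mu}$, the corollary yields $\H(u^n)-\H(u^{n+1})\ge(\nicefrac{\alpha}{\delta_{\max}}-\nicefrac{L_\H}{2})\|u^{n+1}-u^n\|_Y^2>0$, hence the key inequality~\eqref{eq:keyinequalityeps} of Theorem~\ref{thm:convergenceNew}, and the damped Ka\v{c}anov iteration converges to the unique solution $u^\star\in K$ of~\eqref{eq:variationalproblem}, which by the Proposition preceding this subsection coincides with the unique weak solution of~\eqref{eq:variationalproblem1}. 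I do not anticipate a genuine obstacle here; the only point requiring a small amount of care is the verification of the weak-continuity condition~(a), and even that is trivialized by the Lipschitz bound on $\H'$ coming from~\eqref{eq:muassumption}. Everything else is a bookkeeping exercise of matching the PDE data to the abstract framework, with $Y=\Hone(\Omega)$, $X=\{w\in Y:\,w|_{\Gamma_1}=0\}$, $K$ as in~\eqref{eq:KPDE}, $\nu=m_\mu$, $L_\H=3M_\mu$, and $\alpha$ the Poincar\'e--Friedrichs-type constant from Proposition~\ref{prop:acoercivebounded}.
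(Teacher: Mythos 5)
Your proposal is correct and matches the paper's own (implicit) proof: Theorem~\ref{thm:sclconvergence} is presented there as a direct consequence of the preceding paragraph, which verifies {\rm(A1)--(A4)} and {\rm(K)} under~\eqref{eq:muassumption} (via Proposition~\ref{prop:acoercivebounded} and~\eqref{eq:Gderivativescl}, with $\nu=m_\mu$ and $L_\H=3M_\mu$) and then invokes Corollary~\ref{cor:convergence} with $\delta_{\max}<\nicefrac{2\alpha}{L_\H}=\nicefrac{2\alpha}{3M_\mu}$. Your explicit check of the weak-continuity condition {\rm(a)} via the Lipschitz continuity of the map $\nabla z\mapsto\muf{z}\nabla z$ is a detail the paper leaves implicit, but it does not change the route.
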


\begin{remark}
We emphasize that the assumptions on the damping function $\delta$ from Theorem~\ref{thm:sclconvergence} are sufficient for the key inequality~\eqref{eq:keyinequalityeps} to hold, cf.~Corollary~\ref{cor:convergence}, however, they are not necessary. Indeed, as both step size methods from \S\ref{sec:stepsize} guarantee this inequality, they yield the convergence in the setting of Theorem~\ref{thm:sclconvergence} without the restriction on~$\delta$.   
\end{remark}

\subsection{Numerical experiments} \label{sec:numexp}
We will now perform a number of numerical tests for the modified Ka\v{c}anov method based on the different step size methods from \S\ref{sec:stepsize} in the context of the quasilinear elliptic boundary value problem~\eqref{eq:scl}. 

In all experiments, we let $\Omega:=(-1,1)^2 \setminus ([0,1] \times [-1,0])$ be a standard L-shaped domain in $\mathbb{R}^2$. We focus on homogeneous Dirichlet boundary conditions, i.e.,~$\Gamma_1=\partial \Omega$ and $g \equiv 0$, and therefore we set $Y:=X=K=\Hone_0(\Omega)$. Moreover, we consider the norm $\norm{\cdot}_Y:=\norm{\nabla(\cdot)}_{\Ltwo(\Omega)}$, so that we obtain $\alpha=m_\mu$ in~\eqref{eq:alpha1} and \eqref{eq:alpha2}. The source function~$f$ in~\eqref{eq:sclmain}, respectively the linear functional $b \equiv b(u)$ in the abstract analysis in \S\ref{sec:abstract}, is chosen such that the analytical solution of \eqref{eq:sclmain}--\eqref{eq:scldirichlet}, with  $g \equiv 0$ on the Dirichlet boundary $\Gamma_1=\partial \Omega$, is given by the smooth function $u^\star(x,y)=\sin(\pi x) \sin(\pi y)$. For the numerical approximation, we will use a conforming $\mathbb{P}_1$-finite element framework with a uniform mesh consisting of approximately $3 \cdot 10^6$
triangles. Throughout we set the correction factor in the adaptive step size algorithms to be $\sigma=0.9$. We have implemented our algorithms in Matlab, and solved the linear equations by means of the backslash operator. Furthermore, the errors to be illustrated in the figures below are taken with respect to the underlying exact \emph{discrete} solution, which, in each case, was determined with the aid of 1000 steps of the Zarantonello iteration with a suitable damping parameter, cf.~\cite[Prop.~5.1]{HeidWihler:2020a}.

\subsubsection{Monotonically decreasing diffusion} \label{sec:mondec} We consider the nonlinear diffusion coefficient $\mu(t)=\mu_1(t)=(t+1)^{-1} + \nicefrac{1}{2}$, for $t \geq 0$, see Figure~\ref{fig:mu}. 
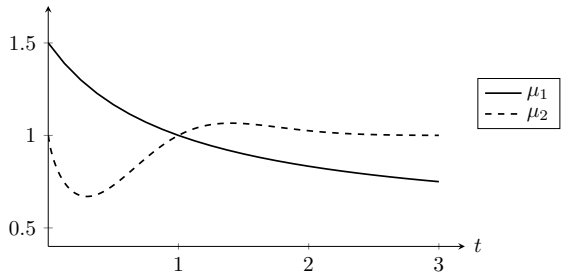
\begin{figure}
\begin{tikzpicture}[scale=0.8]
\begin{axis}[
		xscale = 1,
		yscale = 0.7,
        axis x line=middle, 
        axis y line=middle, 
        xmin=0, xmax=3.2, xlabel=$t$,
        xtick = {0,1,2,3},
        ymin=0.4, ymax=1.7, 
        ytick = {0.5,1,1.5},
        xlabel style={at={(axis cs:3.3,0.48)},anchor=north},
        legend pos=outer north east,
        ]
    \addplot[domain=0:3, thick] {1/(x+1)+0.5};
    \addplot[domain=0:3, thick, samples=200, dashed] {x*exp(-x^2)*ln(x+0.0001)+1};
    \legend{$\mu_1$,$\mu_2$};
\end{axis}
\end{tikzpicture}
\caption{The diffusion coefficients $\mu(t)=\mu_1(t)$ and $\mu(t)=\mu_2(t)$ from \S\ref{sec:mondec} and \S\ref{sec:nonmon}, respectively.}
\label{fig:mu}
\end{figure}
It is straightforward to verify that the diffusion parameter $\mu$ satisfies~\eqref{eq:muassumption} as well as the properties ($\mu$1)--($\mu$4). We compare the performance of the classical Ka\v{c}anov scheme~\eqref{eq:Ktrad} with the damped Ka\v{c}anov method~\eqref{eq:Knew} for both step size strategies from \S\ref{sec:stepsize}. For the application of the two step size methods, we recall that we need to know the values of the constants $m_\mu$ and $M_\mu$ a priori; in light of Remark~\ref{rem:computeconst} they are found to be $m_\mu = \nicefrac{3}{8} $ and $M_\mu= \nicefrac{3}{2}$. Moreover, here and in the two following experiments, we use the initial parameters $\delta=1$ and $p=-1$ in Algorithm~\ref{alg:PreCor}. Even though the diffusion parameter is monotonically decreasing and differentiable, which implies the convergence of the classical Ka\v{c}anov scheme, we can see from Figure~\ref{fig:mondec} that the damped Ka\v{c}anov method with either the step size method from \S\ref{sec:sstaylor} or \S\ref{sec:ssdp} performs (overall) better (in terms of error reduction per iteration step) than the undamped iteration. It is noteworthy that the damping parameters are larger than 1 in all steps for both approaches, see Figure~\ref{fig:mondecss}.

\begin{figure}
{\includegraphics[width=0.48\textwidth]{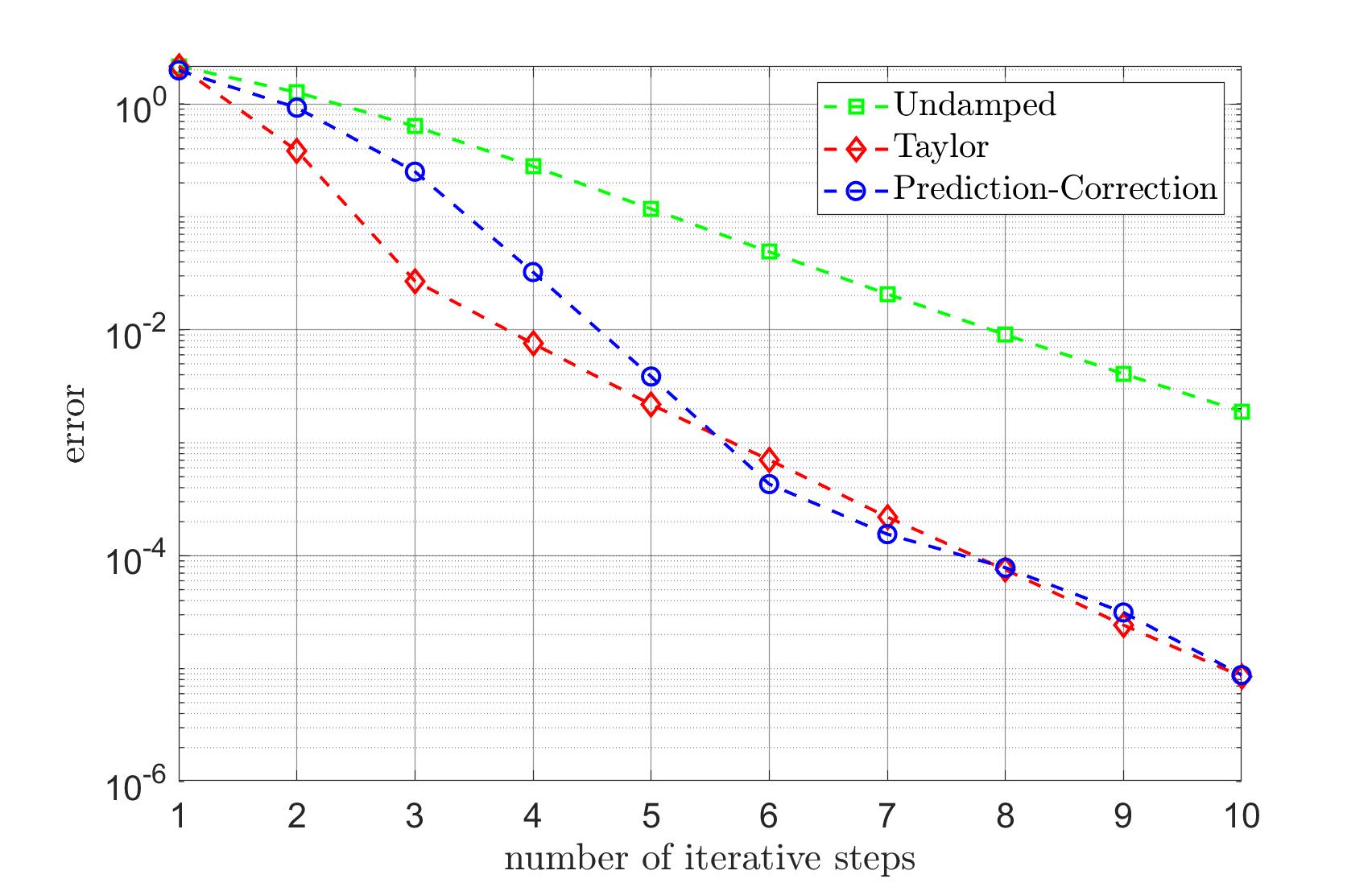}} \hfill
{\includegraphics[width=0.48\textwidth]{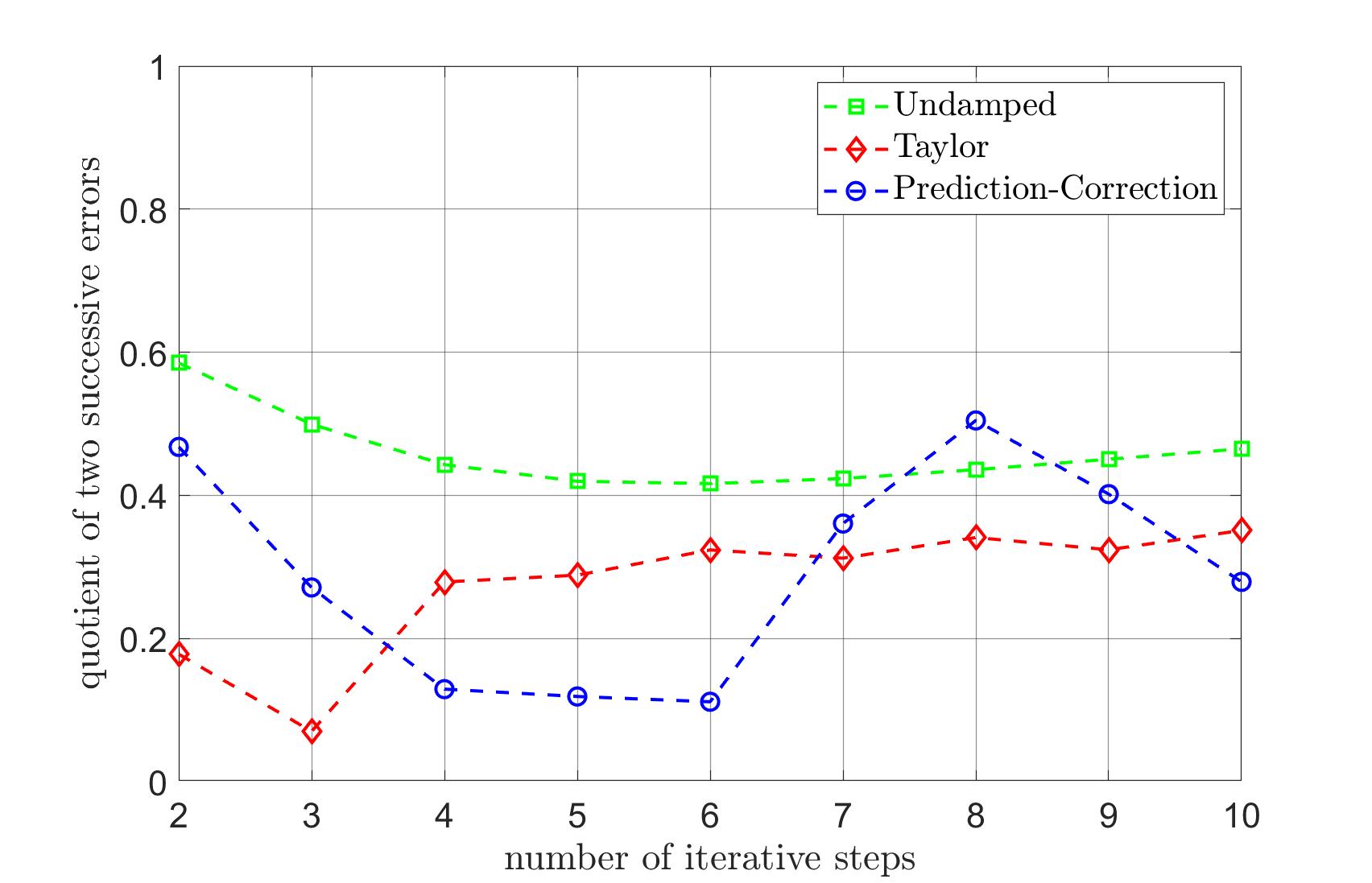}}
 \caption{Experiment~\ref{sec:mondec}. Comparison of the performance of the classical Ka\v{c}anov scheme ("Undamped") with the step size algorithms from \S\ref{sec:sstaylor} ("Taylor") and \S\ref{sec:ssdp} ("Prediction-Correction"). Left: Error decay. Right: Ratio of two successive errors.}\label{fig:mondec}
\end{figure}

\begin{figure}
{\includegraphics[width=0.5\textwidth]{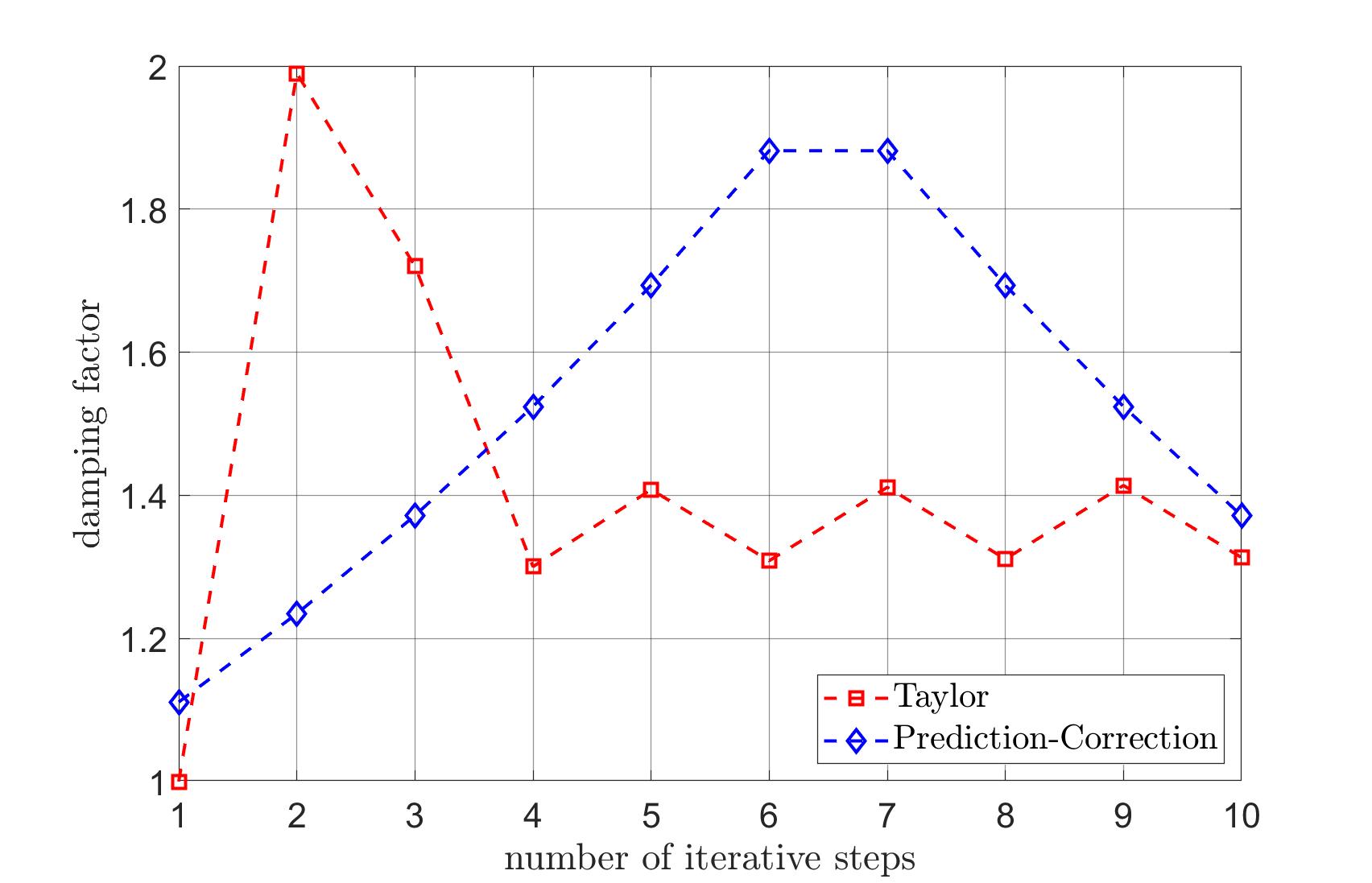}}
\caption{Experiment~\ref{sec:mondec}. Step sizes of each iterative step for the respective damped Ka\v{c}anov scheme.}
\label{fig:mondecss}
\end{figure}

\subsubsection{Non-monotone diffusion} \label{sec:nonmon}
In our second experiment, we consider the nonlinear diffusion parameter $\mu(t)=\mu_2(t)=t \exp(-t^2) \log (t+\epsilon)+1$, $t \geq 0$, for $\epsilon=10^{-4}$, see Figure~\ref{fig:mu}. It can be shown that~$\mu$ satisfies~\eqref{eq:muassumption} with $m_\mu \approx 0.483503$ and $M_\mu \approx 1.73565$, but is \emph{not} monotonically decreasing (nor increasing). Even though Figure~\ref{fig:nonmon} indicates that the classical Ka\v{c}anov scheme~\eqref{eq:Ktrad} may still converge, the convergence rate is really poor. In contrast, the modified Ka\v{c}anov scheme~\eqref{eq:Knew} with either damping strategy from \S\ref{sec:stepsize} exhibits a considerably better performance. We can observe in Figure~\ref{fig:nonmonss} that the damping parameters for both step size strategies from \S3 are (mostly) smaller than 1 in this specific experiment.

\begin{figure}
{\includegraphics[width=0.48\textwidth]{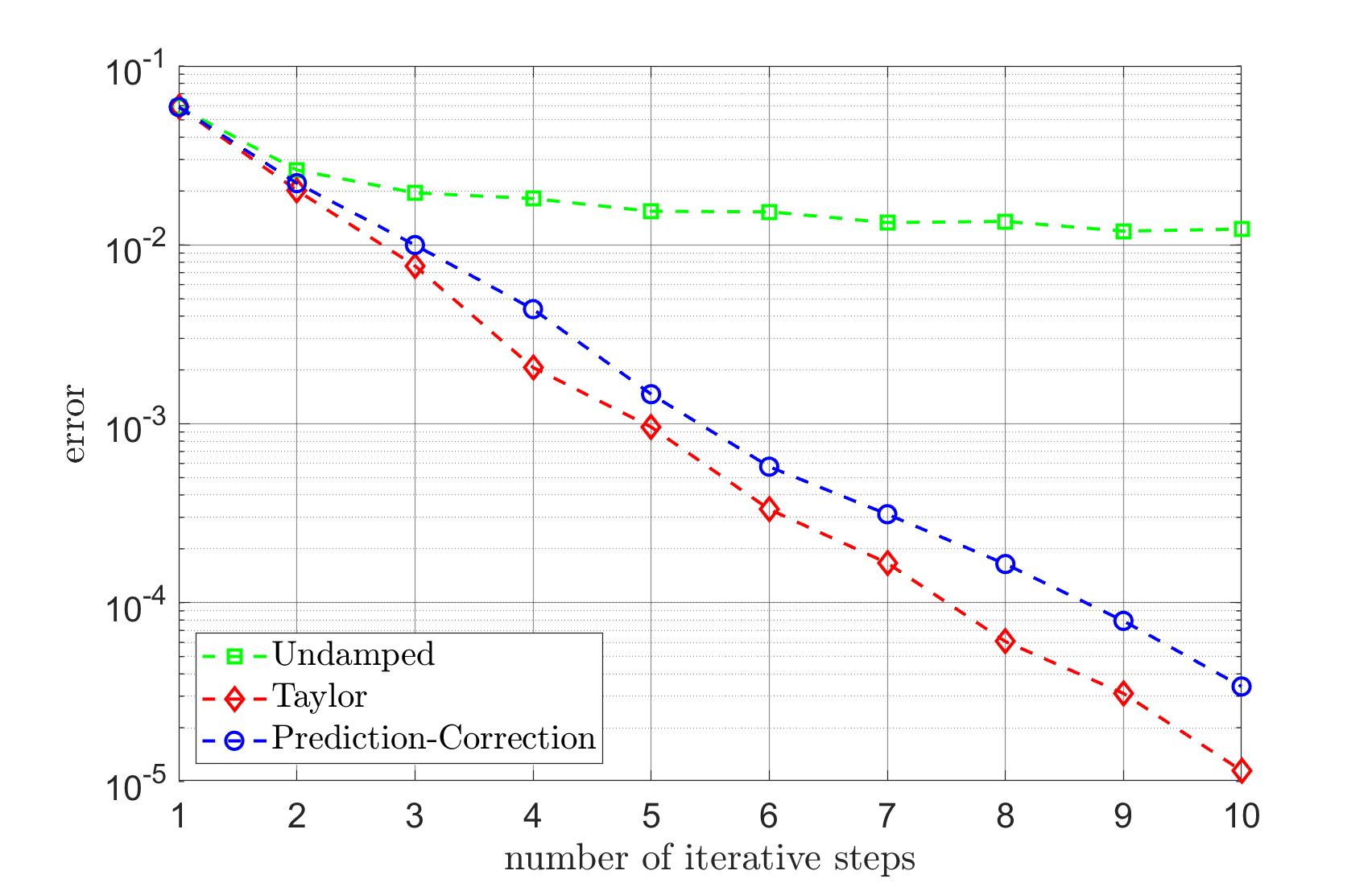}} \hfill
{\includegraphics[width=0.48\textwidth]{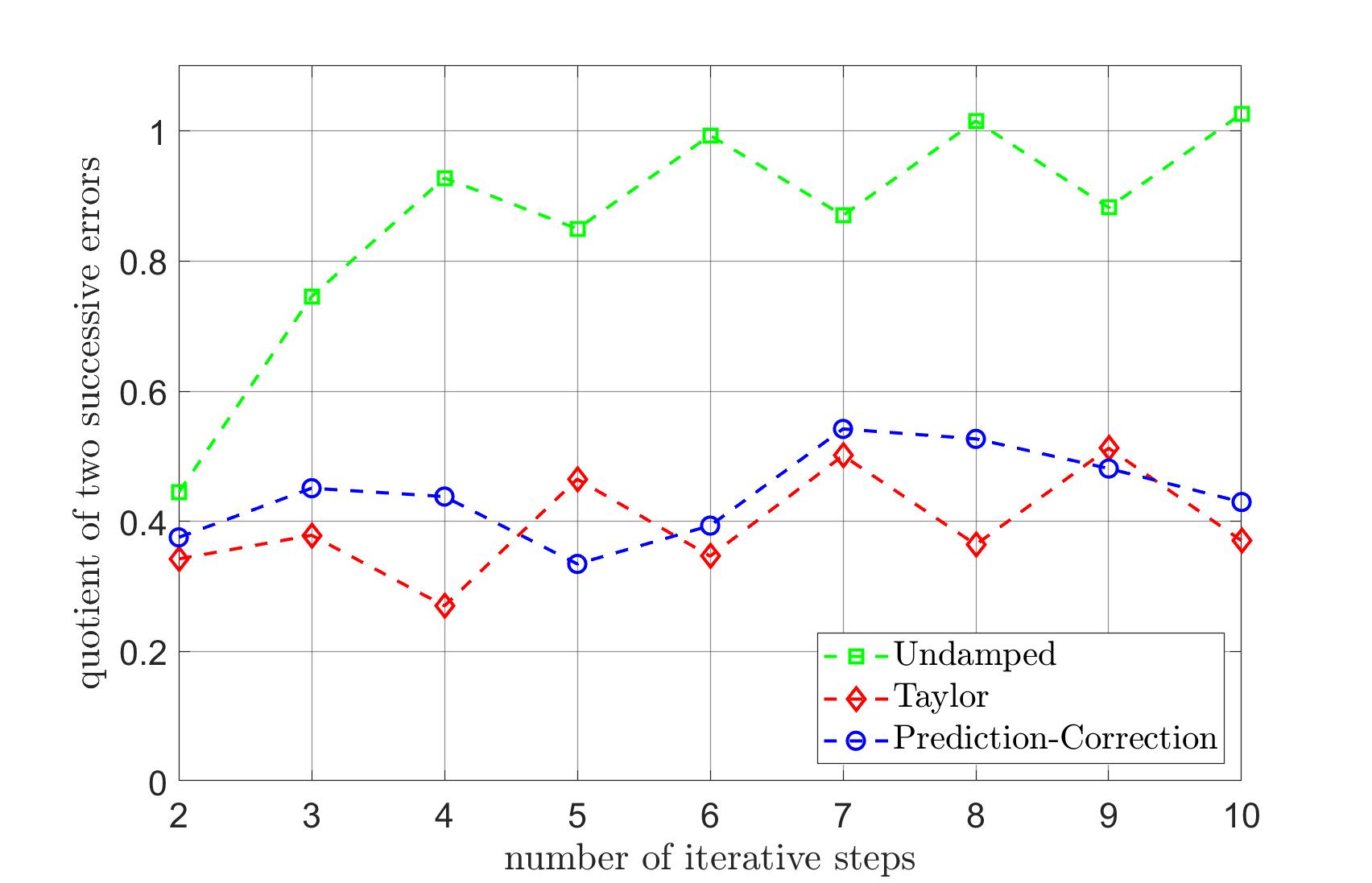}}
 \caption{Experiment~\ref{sec:nonmon}. Comparison of the performance of the classical Ka\v{c}anov scheme ("Undamped") with the step size algorithms from \S\ref{sec:sstaylor} ("Taylor") and \S\ref{sec:ssdp} ("Prediction-Correction"). Left: Error decay. Right: Ratio of two successive errors.}\label{fig:nonmon}
\end{figure}

\begin{figure}
{\includegraphics[width=0.5\textwidth]{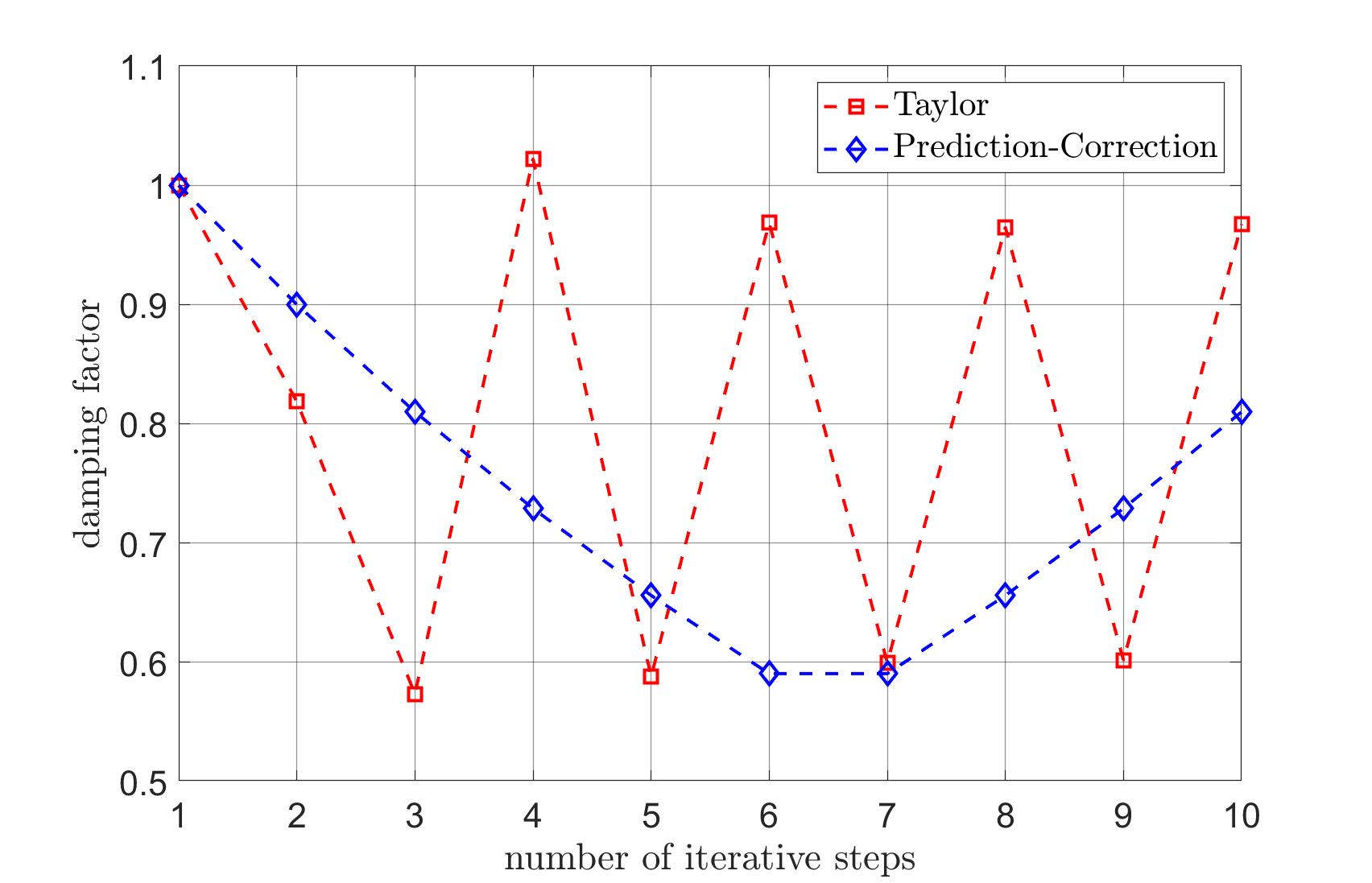}}
\caption{Experiment~\ref{sec:nonmon}. Step sizes of each iterative step for the respective damped Ka\v{c}anov scheme.}
\label{fig:nonmonss}
\end{figure}

\subsubsection{Diffusion coefficient motivated by fluid viscosities with shear thinning and shear thickening zones} \label{sec:viscosity}
In our last experiment, we will consider a diffusion coefficient that is motivated by a model of a fluid viscosity with both shear thinning and shear thickening zones; we refer to the work~\cite{shearthickening} for the details about the rheological properties of the corresponding fluids. Specifically, let
\begin{align*}
\mu_3(t)=\begin{cases}
\mu_c+\frac{\mu_0-\mu_c}{1+\left(\frac{t^2}{t-t_c}\right)^2}, & 0 \leq t \leq t_c \\
\mu_{\max}+\frac{\mu_c-\mu_{\max}}{1+\left(\frac{t-t_c}{t-t_{\max}}\right)^2}, & t_c<t\leq t_{\max} \\
\mu_\infty+\frac{\mu_{\max}-\mu_\infty}{1+\left(t-t_{\max}\right)^2}, & t>t_{\max},
\end{cases}
\end{align*}   
whereby we set $t_c=0.5, \ t_{\max}=2, \ \mu_0=5, \ \mu_c=4, \ \mu_{\max}=10$, and $\mu_{\infty}=6$; this diffusion coefficient is illustrated in Figure~\ref{fig:muViscosity}. 

\begin{figure}
\begin{tikzpicture}[scale=0.8]
\begin{axis}[
		xscale = 1,
		yscale = 0.7,
        axis x line=middle, 
        axis y line=middle, 
        xmin=0, xmax=4.5, xlabel=$t$,
        xtick = {0,0.5,1,2,3,4},
        ymin=3, ymax=12, 
        ytick = {4,5,6,8,10},
        xlabel style={at={(axis cs:4.7,3.48)},anchor=north},
        legend pos=outer north east,
        ]
    \addplot[domain=0:0.5, thick] {4+1/(1+((x^2)/(x-0.5)^2)};
    \addplot[domain=0.5:2, thick] {10-6/(1+((x-0.5)/(x-2))^2)};
    \addplot[domain=2:4.5, thick] {6+4/(1+(x-2)^2)};
\end{axis}
\end{tikzpicture}
\caption{The diffusion coefficients $\mu_3$ from \S\ref{sec:viscosity}.}
\label{fig:muViscosity}
\end{figure}
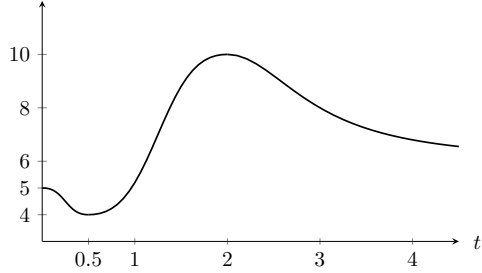

In~\cite{shearthickening} it is shown that the diffusion coefficient $\mu(t)$ is continuously differentiable. Once more, from Remark~\ref{rem:computeconst}, it follows that~\eqref{eq:muassumption} is satisfied with $m_\mu = 1.68$ and $M_\mu \approx 28.2696$. We clearly see in Figure~\ref{fig:viscosity} that the classical Ka\v{c}anov scheme \emph{does not converge} for this specific problem. In contrast, the modified Ka\v{c}anov method with either of the two step size strategies from \S\ref{sec:stepsize} converges perfectly, with the ratio of two successive errors being around $0.8$. Here, the step sizes in the damped Ka\v{c}anov scheme are, after an initial phase, between $0.3$ and $0.7$, and thus noticeably below $1$.    

\begin{figure}
{\includegraphics[width=0.48\textwidth]{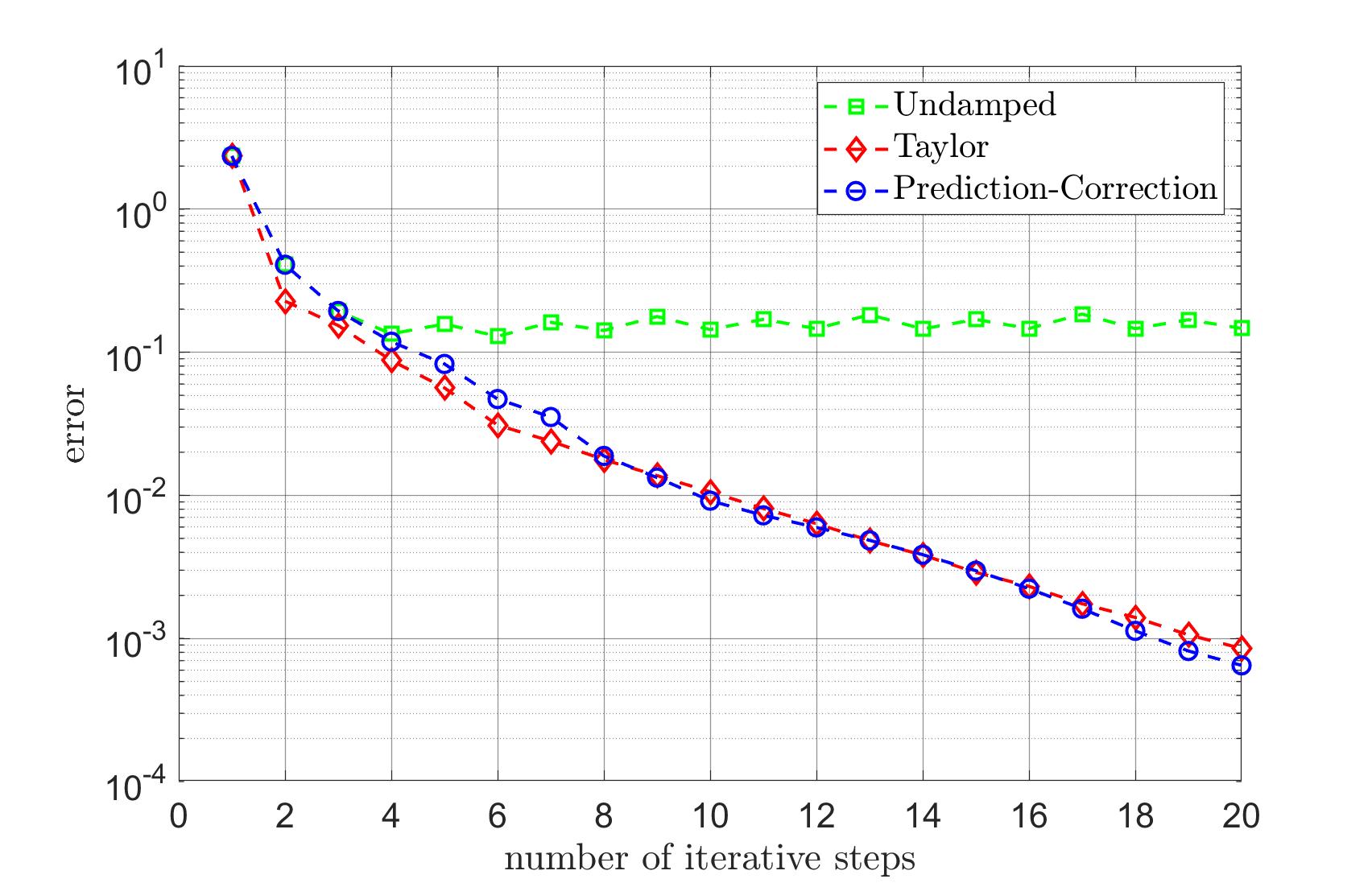}} \hfill
{\includegraphics[width=0.48\textwidth]{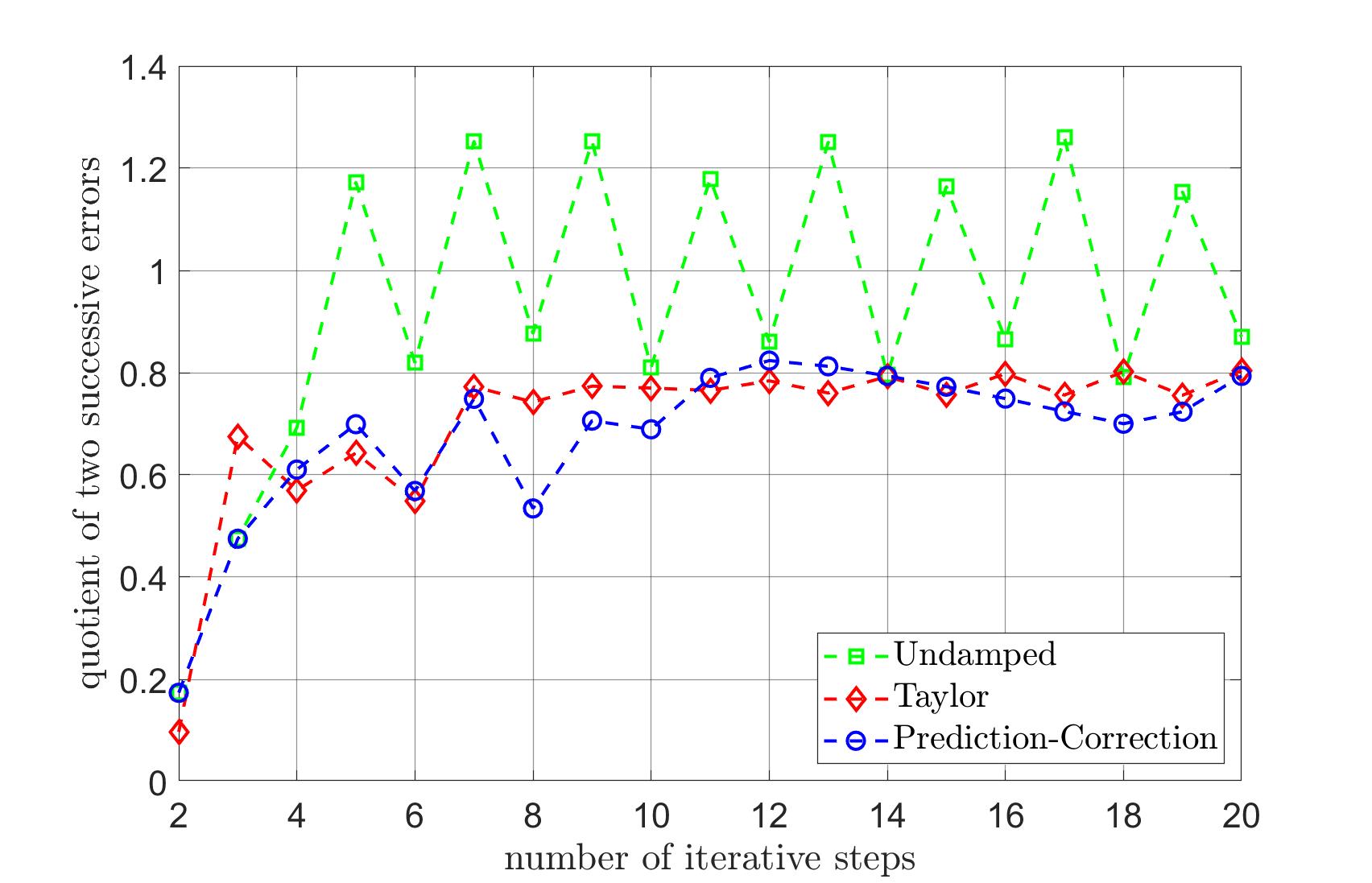}}
 \caption{Experiment~\ref{sec:viscosity}. Comparison of the performance of the classical Ka\v{c}anov scheme ("Undamped") with the step size algorithms from \S\ref{sec:sstaylor} ("Taylor") and \S\ref{sec:ssdp} ("Prediction-Correction"). Left: Error decay. Right: Ratio of two successive errors.}\label{fig:viscosity}
\end{figure}

\begin{figure}
{\includegraphics[width=0.5\textwidth]{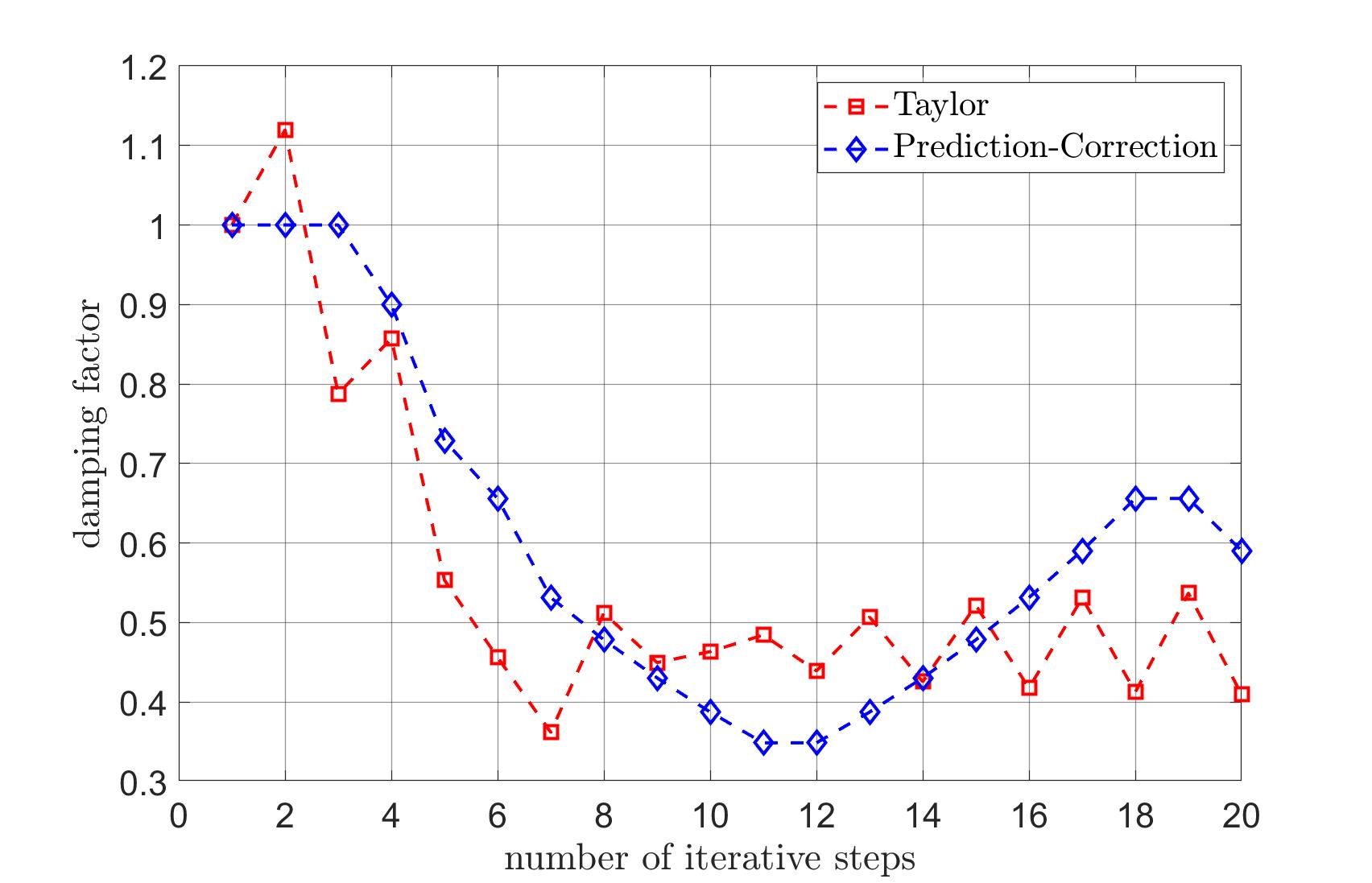}}
\caption{Experiment~\ref{sec:viscosity}. Step sizes of each iterative step for the respective damped Ka\v{c}anov scheme.}
\end{figure}

\section{Conclusion} \label{sec:conclusion}
In this work, we have devised a modified version of the classical Ka\v{c}anov iteration scheme. Exploiting the iterative linearization approach, cf.~\cite{HeidWihler:2020a}, we have shown that the introduction of a damping parameter allows to derive a new convergence analysis, which applies to a wider class of problems. For instance, in the context of quasilinear elliptic PDE, a standard monotonicity condition on the diffusion coefficient can be dropped. Moreover, our numerical tests highlight that the modified Ka\v{c}anov method, in combination with suitable damping strategies, outperforms the classical scheme for the examples under consideration. Especially, the final experiment in our work illustrates that the modified Ka\v{c}anov scheme can effectively approximate nonlinear problems, for which the classical Ka\v{c}anov method fails to generate a sequence converging to a solution. This underlines the relevance of our modified Ka\v{c}anov scheme. We close by remarking that our work can be extended in a straightforward manner to quasilinear systems with applications to, e.g., plasticity or quasi-Newtonian fluids. 

\bibliographystyle{amsalpha}
\bibliography{references}
\end{document}